\numberwithin{equation}{section}
{\theoremstyle{break}\newtheorem{theorem}{Theorem}}
\theoremstyle{break}\theorembodyfont{\rmfamily}}
\theoremstyle{break}\theorembodyfont{\rmfamily}\newtheorem{definition}[theorem]{Definition}}
\theoremstyle{break}\newtheorem{lemma}[theorem]{Lemma}}
\theoremstyle{break}\newtheorem{proposition}[theorem]{Proposition}}
\theoremstyle{break}\newtheorem{corollary}[theorem]{Corollary}}
\theoremstyle{break}\theorembodyfont{\rmfamily}}
\theoremstyle{break}\theorembodyfont{\rmfamily}\newtheorem{remark}[theorem]{Remark}}
\newenvironment{proof}[1][Proof]{\noindent\textbf{#1.} }{\
\rule{0.5em}{0.5em}}
\newcommand{\R}{\mathbb{R}}
\newcommand{\N}{\mathbb{N}}
\newcommand{\TT}[1]{\mathcal{T}_{#1}}
\newcommand{\Tt}[1]{\mathcal{T}^1_{#1}}
\newcommand{\card}[1]{\operatorname*{card}({#1})}
\newcommand{\sgn}[1]{\operatorname*{sgn}({#1})}
\begin{document}
\author{}
\date{}
\title{\scshape Notions of Affinity in Calculus of Variations with Differential Forms}
\maketitle

\centerline{\scshape Saugata Bandyopadhyay }
\medskip
{\footnotesize
\centerline{Department of Mathematics \& Statistics}
 \centerline{IISER Kolkata}
   \centerline{Mohanpur-741246, India}
   \centerline{saugata.bandyopadhyay@iiserkol.ac.in}

}

\medskip

\centerline{\scshape Swarnendu Sil}
\medskip
{\footnotesize
 \centerline{ Section de Math\'{e}matiques}
   \centerline{Station 8, EPFL}
   \centerline{1015 Lausanne, Switzerland}
   \centerline{swarnendu.sil@epfl.ch}
}

\begin{abstract}
\noindent
Ext-int.\ one affine functions are functions affine in the direction of one-divisible exterior forms, with respect to exterior product in one variable and with respect to
interior product in the other. The purpose of this article is to prove a characterization theorem for this class of functions, which plays an important role
in the calculus of variations for differential forms.
\\ \\
\textit{Keywords:} affine, ext.\ one affine, ext-int.\ one affine, exterior convexity, exterior form, differential form.\\
\textit{2010 Mathematics Subject Classification: }49JXX.
\end{abstract}

\section{Introduction}
In this article, we introduce the notion of ext-int.\ one convex functions and study the structure of its affine analogue. This class of functions arise naturally in the context of calculus of variations
when we consider the minimization problem for integrals of the form
$$
I(\omega)=\int_{\Omega}f\left(d\omega,\delta\omega \right),
$$
where $1\leqslant k\leqslant n-1$, $f:\Lambda^{k+1}\times\Lambda^{k-1}\rightarrow\mathbb{R}$ is continuous, $\Omega\subseteq\R^n$ is open,
$\omega:\Omega\rightarrow\Lambda^k$ is a $k$-form and $d$, $\delta$ are exterior derivative and codifferential operators respectively. In particular, when $k=1$, identifying one-forms with vector fields, the minimization problem can be seen as the minimization of one involving the curl and the divergence, see Barroso-Matias \cite{barroso-matias}, Dacorogna-Fonseca \cite{dac-fonseca} and references therein. A subclass of the class of functions aforementioned above, namely the class of ext.\ one convex functions, was first introduced in Bandyopadhyay-Dacorogna-Sil \cite{bds} to handle minimization problems where $f$ depends only on the exterior derivative. In the same article, a characterization theorem was
obtained for ext.\ one affine functions, see Theorem 3.3 of \cite{bds}.
To extend the framework to the case where $f$ has explicit dependence on the codifferential as well, one needs to introduce the notion of ext-int.\ one convex functions which play a role as crucial as that of ext.\ one convex functions in the aforementioned context.
\\ \\
The main goal of this article is to prove a characterization theorem for ext-int.\ one affine functions, see Theorem \ref{Thm principal ext-int. quasiaffine}. In the process, we also find a new proof of the theorem (cf.\ Theorem \ref{characterizations of affine functions}) that characterizes ext.\ one affine functions. The new proof is more algebraic in spirit, constructive through a recursion and provides a different perspective on the result. Additionally, the technique we employ here to handle order-preserving permutations of multiple number of ordered multi-indices in the
course of the proof is of independent value and implicitly already played an important role in connecting the calculus of variations with
forms with the classical vectorial calculus of variations, see Bandyopadhyay-Sil \cite{bs}.
\\ \\
The rest of the article is organized as follows. In Section \ref{notation}, we collect the notations we have used throughout the article. Section \ref{Notions of exterio-interior convexity} introduces various classes of exterior convex functions i.e. functions that are convex with respect to the exterior structure.  A few algebraic lemmas  are proved in Section \ref{algebraiclemmas}, which are used in Section \ref{main} to prove the main theorem that reads as follows
\begin{theorem}
Let $1\leqslant k\leqslant n-1$. Then, $f:\Lambda^{k+1}\times \Lambda^{k-1}\rightarrow\mathbb{R}$ is ext-int.\ one affine
if and only if there exist $c_{s}\in\Lambda^{(k+1)s}$ and $d_{r}\in\Lambda^{(n-k+1)r},$ for all $0\leqslant s\leqslant\left[
\frac{n}{k+1}\right]  $,\,$0 \leqslant r\leqslant\left[
\frac{n}{n-k+1}\right] $ such that,
\[
f\left(  \xi , \eta \right)  =\sum_{s=0}^{\left[ \frac{n}{k+1}\right]  }\left\langle c_{s}%
;\xi^{s}\right\rangle  + \sum_{r=0}^{\left[  \frac{n}{n-k+1} \right]  }\left\langle d_{r}%
;(\ast\eta)^{r}\right\rangle,\text{ for all }\xi\in\Lambda^{k+1},\eta\in\Lambda^{k-1}.
\]
\end{theorem}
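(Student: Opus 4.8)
For the forward (``if'') direction the plan is a direct verification, using that ext-int.\ one affinity is preserved under finite linear combinations, so it suffices to treat a single summand. Along a one-divisible direction the first slot is perturbed by $\nu\wedge\alpha$ and the second by $\nu\lrcorner\alpha$, with $\nu\in\Lambda^1$ and $\alpha\in\Lambda^k$. For a term $\langle c_s;\xi^s\rangle$ I would expand $(\xi+t\,\nu\wedge\alpha)^s$ by the binomial theorem; since $(\nu\wedge\alpha)\wedge(\nu\wedge\alpha)=0$, only the constant and linear terms in $t$ survive, so the map is affine, and the perturbation of the (absent) second variable is irrelevant. For a term $\langle d_r;(\ast\eta)^r\rangle$ I would first push the perturbation through $\ast$ via the standard identity $\ast(\nu\lrcorner\alpha)=\pm\,\nu\wedge\ast\alpha$ and then argue identically, now using $(\nu\wedge\ast\alpha)\wedge(\nu\wedge\ast\alpha)=0$.

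For the (harder) converse I would first extract two separate affinities by specializing the $k$-form $\alpha$. Taking $\alpha=\nu\wedge\psi$ makes the exterior component $\nu\wedge\alpha$ vanish while $\nu\lrcorner\alpha$ sweeps out an arbitrary interior direction (using $\nu\lrcorner\nu\lrcorner=0$); taking $\alpha=\nu\lrcorner\chi$ makes the interior component vanish while $\nu\wedge\alpha$ sweeps out an arbitrary exterior direction. Hence $f(\cdot,\eta)$ is ext.\ one affine for each fixed $\eta$, and, via $\ast$ and the Hodge identity above, $\zeta\mapsto f(\xi,\ast^{-1}\zeta)$ is ext.\ one affine on $\Lambda^{n-k+1}$ for each fixed $\xi$, where $\zeta=\ast\eta$. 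Applying the known characterization (Theorem \ref{characterizations of affine functions}) in each variable shows that $f$ is a polynomial, of degree at most $\left[n/(k+1)\right]$ in $\xi$ and at most $\left[n/(n-k+1)\right]$ in $\zeta$, and that its bihomogeneous component of bidegree $(p,q)$ has the form $f_{p,q}(\xi,\eta)=\langle C_{p,q};\xi^p\otimes(\ast\eta)^q\rangle$ for some $C_{p,q}\in\Lambda^{(k+1)p}\otimes\Lambda^{(n-k+1)q}$.

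The remaining task is to show $f_{p,q}\equiv0$ whenever $p\geq1$ and $q\geq1$, and for this I would feed the \emph{coupled} condition back in. Since $\xi^p$ and $(\ast\eta)^q$ are each affine in $t$ along a one-divisible direction (again by $(\nu\wedge\alpha)^2=0$ and $(\nu\wedge\ast\alpha)^2=0$), the second $t$-derivative of $f_{p,q}$ along the coupled direction is purely the cross term and lands in bidegree $(p-1,q-1)$, so distinct bidegrees do not interfere and ext-int.\ one affinity is equivalent, bidegree by bidegree, to the linear relations $\langle C_{p,q};(\nu\wedge\alpha\wedge\xi^{p-1})\otimes(\nu\wedge\ast\alpha\wedge(\ast\eta)^{q-1})\rangle=0$ for all $\nu,\alpha,\xi,\eta$. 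The main obstacle is the purely algebraic implication that these relations—which share the \emph{same} $\nu$ and the coupled pair $\alpha,\ast\alpha$ across the two tensor slots, and which therefore are \emph{not} implied by the two separate affinities alone—force $\langle C_{p,q};\xi^p\otimes(\ast\eta)^q\rangle\equiv0$. I expect this to be exactly where the algebraic lemmas of Section \ref{algebraiclemmas} are indispensable: evaluating on decomposable $\xi,\eta,\alpha$ and basis one-forms $\nu$ converts the relations into sign-weighted sums over order-preserving shuffles of several ordered multi-indices, and controlling these shuffles is what lets one solve for the offending components of $C_{p,q}$ and conclude that the mixed part of $f$ vanishes.

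Finally I would assemble the surviving pieces: the components with $q=0$ give $\sum_{s}\langle C_{s,0};\xi^s\rangle$, those with $p=0$ give $\sum_{r}\langle C_{0,r};(\ast\eta)^r\rangle$, and the degree bounds already established truncate the sums at $s=\left[n/(k+1)\right]$ and $r=\left[n/(n-k+1)\right]$. Setting $c_s=C_{s,0}$ and $d_r=C_{0,r}$, and absorbing the constant term into $c_0$ or $d_0$, produces precisely the stated representation, which closes the argument.
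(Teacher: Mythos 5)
Your architecture coincides with the paper's proof of Theorem \ref{Thm principal ext-int. quasiaffine}: verify the representation directly for the easy direction; for the converse, use Lemma \ref{corollary of crucial lemma for extintaffine} to get ext.\ one affinity in $\xi$ and int.\ one affinity in $\eta$, apply Theorem \ref{characterizations of affine functions} and Corollary \ref{characterizations of intaffine functions} to obtain a bihomogeneous polynomial expansion with coefficients $C_{p,q}$, decouple the bidegrees by homogeneity, and then use the coupled perturbation $(\xi+t\,\nu\wedge\alpha,\,\eta+t\,\nu\lrcorner\alpha)$ to show the mixed components with $p,q\geqslant1$ vanish. Up to and including the derivation of the relations $\left\langle C_{p,q};(\nu\wedge\alpha\wedge\xi^{p-1})\otimes(\nu\wedge(\ast\alpha)\wedge(\ast\eta)^{q-1})\right\rangle=0$, this is sound and is essentially Steps 1--3 of the paper's proof.

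However, the decisive step --- showing that these relations force $C_{p,q}=0$ --- is not proved; you defer it to ``the algebraic lemmas of Section \ref{algebraiclemmas}'', which is a misdirection: Lemmas \ref{orthogonality 31.7.2012} and \ref{23.9.2012.1} are consumed inside the proof of Theorem \ref{characterizations of affine functions}, which you already invoke as a black box, and they are not what finishes the mixed-term step. The missing idea is a degree count. A component of $C_{p,q}$ is indexed by a pair of multi-indices $(I,J)$ with $|I|=(k+1)p$ and $|J|=(n-k+1)q$; for $p,q\geqslant1$ these lengths sum to at least $(k+1)+(n-k+1)=n+2>n$, so $I$ and $J$ must share an index, say $i_\mu=j_\nu$. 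This overlap is precisely what permits a single test direction that hits both slots at once: taking $a:=e^{i_\mu}$, $b:=e^{I_\mu(i_\mu)}+\ast e^{J_\nu(j_\nu)}$ (where $I_\mu$, $J_\nu$ denote the $(k+1)$- and $(n-k+1)$-blocks of $I$ and $J$ containing the shared index), and building $\xi$ and $\ast\eta$ from the remaining blocks, one gets $a\wedge b=\pm e^{I_\mu}$ and $\ast(a\lrcorner b)=\pm a\wedge\ast b=\pm e^{J_\nu}$, so the coefficient of $t^2$ in $f_{p,q}(\xi+t\,a\wedge b,\eta+t\,a\lrcorner b)$ is $\pm\,p!\,q!$ times the single component $d^{I,J}$, and ext-int.\ one affinity kills exactly that component. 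Without this counting argument and explicit choice --- which is the entire content of the theorem beyond the two one-variable characterizations, and the source of the ``nonlinearity in at most one variable'' phenomenon --- your proof is incomplete. (A minor further omission: when $k+1$ is odd one has $\xi^p=0$ for $p\geqslant2$, and similarly for $\ast\eta$; these degenerate cases must be noted when reconstructing $e^{I\setminus I_\mu}$ and $e^{J\setminus J_\nu}$ as powers.)
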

The aforementioned theorem has a curious implication. Note that, we have nonlinearity in $\xi$ and $\eta$ if and only if $k$ is odd, $n$ is even,  $n\geqslant 2(k+1)$ and $n\geqslant 2(n-k+1)$.
Since the two inequalities are never satisfied simultaneously, we have nonlinearity at the most in one variable, the other variable
appearing as an affine term, see Corollary \ref{corollary to the main theorem}. This observation is an important one in the context of calculus of variations involving differential forms, as it controls the way a variational problem behaves as a function of the order of the form. In spite of being a problem of vectorial nature, the variational problem always behaves as though it is a scalar one with respect to one of the variables.
\section{Notations}\label{notation}
Let $n\in\N$ and let $k\in\N\cup\{0\}$.
\begin{enumerate}[leftmargin=*]
\item $\Lambda^{k}\left(  \mathbb{R}^{n}\right)$ (or simply
$\Lambda^{k}$) denotes the vector space of all alternating $k$-linear maps. For $k=0,$ we set $\Lambda^{0}=\mathbb{R}$. Note that, $\Lambda^{k}\left(  \mathbb{R}%
^{n}\right)  =\{0\}$ for $k>n$, and, for $k\leq n,$ $\operatorname{dim}\left(
\Lambda^{k}\left(  \mathbb{R}^{n}\right)  \right)  ={\binom{{n}}{{k}}}.$
\item For $1\leqslant k\leqslant n$, we write $\mathcal{T}_k:=\{(i_1,\ldots,i_k)\in\N^k:1\leqslant i_1<\cdots<i_k\leqslant n\}$ and for each $r\in\{1,\ldots,n\}$, $\mathcal{T}^r_k:=\{I\in\mathcal{T}_k:r\notin I\}$. Let $I\in\mathcal{T}_k$ and let $I:=(i_1,\ldots,i_k)$. For each $1\leqslant p\leqslant k$, we write $I(i_p):=(i_1,\ldots,i_{\hat{p}},\ldots,i_k)$, where $\hat{p}$ denotes the absence of the index $p$. Note that, $I(i_p)\in\mathcal{T}_{k-1}$, for all $1\leqslant p\leqslant k$.
\item $\wedge,$ $\lrcorner\,,$ $\left\langle ;\right\rangle $ and $\ast$ denote the exterior product, the interior product, the
scalar product and the Hodge star operator respectively.
\item We use the multi-index notation often. For $I=(i_1,\ldots,i_k)\in \mathcal{T}_k$, we write $e^I$ to denote $e^{i_{1}}\wedge\cdots\wedge e^{i_{k}}$. In this notation, if $\left\{  e^{1},\cdots,e^{n}\right\}  $ is a basis of $\mathbb{R}^{n},$ then, identifying $\Lambda^{1}$ with $\mathbb{R}^{n}$, it follows that $\left\{e^I:I\in\mathcal{T}_k\right\}$ is a basis of $\Lambda^{k}.$
\item Let $\omega\in\Lambda^k$ and let $0\leqslant s\leqslant k\leqslant n$. The space of interior annihilators of $f$ of order $s$ is
$$
\text{\upshape{Ann}}_{\lrcorner}\left(\omega,s\right):=\{f\in\Lambda^s:f\,\lrcorner\,\omega=0\}.
$$
Furthermore, we define the rank of order $s$ of $\omega$ as
$$
\text{\upshape{rank}}_s(\omega):=\binom{n}{s}-\operatorname*{dim}\left(\text{\upshape{Ann}}_{\lrcorner}\left(\omega,s\right)\right).
$$
See \cite{CDK}, \cite{DK} for more details on rank and annihilator.
\item Let $m,n\in\N$ and let $r_1,\ldots,r_m\in\N$ be such that $r_1+\cdots+r_m\leqslant n$. For all $j=1,\ldots,m$, let $I_j\in\mathcal{T}_{r_j}$ satisfy $I_p\cap I_q=\emptyset$, for all $p\neq q$. Then, we define $\left[I_1,\ldots,I_m\right]$ to be the permutation of $\left(I_1,\ldots,I_m\right)$ such that $$\left[I_1,\ldots,I_m\right]\in \mathcal{T}_{r_1+\cdots+r_m}.$$
Furthermore, we define the sign of $\left[I_1,\ldots,I_m\right]$, denoted by $\operatorname*{sgn}\left(I_1,\ldots,I_m\right)$, as
$$
e^{\left[I_1,\ldots,I_m\right]}:=\operatorname*{sgn}\left(I_1,\ldots,I_m\right)e^{I_1}\wedge\cdots\wedge e^{I_m}.
$$
\end{enumerate}

\noindent Concerning the last notation, the following properties are easy to check, which we record for the sake of completeness.
\begin{proposition}
Let $m,n\in\N$ and let $r_1,\ldots,r_m\in\N$ be such that $r_1+\cdots+r_m\leqslant n$. For all $j=1,\ldots,m$, let $I_j\in\mathcal{T}_{r_j}$ satisfy $I_p\cap I_q=\emptyset$, for all $p\neq q$. Then,
\begin{itemize}
\item [$(i)$]$[I_1,I_2]=[I_2,I_1]$ and $[I_1,I_2,I_3]=\left[I_1,[I_2,I_3]\right]=\left[[I_1,I_2],I_3\right]$.
\item [$(ii)$]$\sgn{I_1,I_2}=(-1)^{r_1r_2}\sgn{I_2,I_1}$.
\item [$(iii)$]$\sgn{I_1,I_2,I_3}=\sgn{I_2,I_3}\sgn{I_1,[I_2,I_3]}=\sgn{I_1,I_2}\sgn{[I_1,I_2],I_3}.$
\item [$(iv)$]If $I\in\TT{k}$ is written as $I:=(i_1,\ldots,i_k)$, for all $\mu,\nu=1,\ldots,k$,
$$\sgn{I(i_\mu),i_\mu}=(-1)^{k-\mu}\text{ and }\sgn{i_\nu,I(i_\nu)}=(-1)^{\nu-1}.$$
\item [$(v)$]For all $\omega\in\Lambda^k, \varphi\in\Lambda^l$ and $I\in \TT{k+l}$,
$$
\left\langle \omega\wedge\varphi; e^I\right\rangle =\sum_{\substack{R\in\mathcal{T}_{k},S\in\mathcal{T}_{l},\\R\cup S=I,R\cap S=\emptyset}}\operatorname*{sgn}(R,S)\left\langle\omega;e^R\right\rangle\left\langle\varphi;  e^S\right\rangle.
$$
\end{itemize}
\end{proposition}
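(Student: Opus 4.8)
The plan is to derive all five statements directly from the defining relation $e^{[I_1,\ldots,I_m]}=\sgn{I_1,\ldots,I_m}\,e^{I_1}\wedge\cdots\wedge e^{I_m}$, together with the associativity and graded anticommutativity of the wedge product on one-forms; no deeper input is required. I would first dispose of $(i)$ purely set-theoretically: since $[I_1,\ldots,I_m]$ is by definition the unique strictly increasing reordering of the disjoint union $I_1\cup\cdots\cup I_m$, it depends only on that union, not on the order or grouping of the arguments, which gives both $[I_1,I_2]=[I_2,I_1]$ and the two associations of a triple. For $(ii)$ I would move the block $e^{I_2}$, consisting of $r_2$ one-forms, past the block $e^{I_1}$, consisting of $r_1$ one-forms: each of the $r_2$ one-forms crosses each of the $r_1$ one-forms exactly once, so $e^{I_1}\wedge e^{I_2}=(-1)^{r_1 r_2}e^{I_2}\wedge e^{I_1}$, and combining this with $[I_1,I_2]=[I_2,I_1]$ from $(i)$ and the defining relation yields the claim.

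Statement $(iii)$ I would obtain by evaluating the defining relation in two ways. Using $[I_1,I_2,I_3]=[I_1,[I_2,I_3]]$ from $(i)$, I expand $e^{[I_1,[I_2,I_3]]}=\sgn{I_1,[I_2,I_3]}\,e^{I_1}\wedge e^{[I_2,I_3]}=\sgn{I_1,[I_2,I_3]}\sgn{I_2,I_3}\,e^{I_1}\wedge e^{I_2}\wedge e^{I_3}$ and compare with $e^{[I_1,I_2,I_3]}=\sgn{I_1,I_2,I_3}\,e^{I_1}\wedge e^{I_2}\wedge e^{I_3}$; the second factorization follows symmetrically from $[I_1,I_2,I_3]=[[I_1,I_2],I_3]$. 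For $(iv)$ I would simply count adjacent transpositions: in the concatenation $(I(i_\mu),i_\mu)$ the entry $i_\mu$ sits in the last slot but belongs in slot $\mu$ of the sorted list $I$, so it must cross the $k-\mu$ larger entries, giving $(-1)^{k-\mu}$; symmetrically, $i_\nu$ in $(i_\nu,I(i_\nu))$ starts in front and crosses the $\nu-1$ smaller entries, giving $(-1)^{\nu-1}$.

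Finally, $(v)$ follows by expanding $\omega=\sum_{R\in\TT{k}}\langle\omega;e^R\rangle e^R$ and $\varphi=\sum_{S\in\TT{l}}\langle\varphi;e^S\rangle e^S$, distributing the wedge, and noting that $e^R\wedge e^S=0$ when $R\cap S\neq\emptyset$ while $e^R\wedge e^S=\sgn{R,S}\,e^{[R,S]}$ otherwise; pairing with $e^I$ then selects exactly those terms with $R\cup S=I$ and $R\cap S=\emptyset$. The only places demanding care are the bookkeeping of signs in $(iv)$ and the correct description of the disjoint-union index set in $(v)$, but there is no genuine conceptual obstacle here: the proposition is a collection of elementary identities, recorded so that the sign combinatorics can be invoked cleanly in the algebraic lemmas and the main theorem to follow.
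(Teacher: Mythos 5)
Your proposal is correct: each of the five identities is verified exactly as you describe, by unwinding the defining relation $e^{[I_1,\ldots,I_m]}=\sgn{I_1,\ldots,I_m}\,e^{I_1}\wedge\cdots\wedge e^{I_m}$ together with graded anticommutativity of the wedge product and the bilinearity of the scalar product. The paper records these properties without proof (``easy to check''), and your argument is precisely the standard verification being left to the reader, so there is nothing to reconcile.
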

\section{Notions of exterior convexity}\label{Notions of exterio-interior convexity}
Let us introduce the following classes of functions convex with respect to the exterior structure. We will restrict ourselves to the corresponding affine classes in the subsequent sections.
\begin{definition}\label{29.6.118}
Let $1\leqslant k\leqslant n-1$ and let $f:\Lambda^{k+1}\times\Lambda^{k-1}\rightarrow\R$. We say that
\begin{enumerate}[leftmargin=*]
\item $f$ is \emph{ext-int.\ one convex} if for every $\xi\in \Lambda^{k+1}$, $\eta\in \Lambda^{k-1}$, $a\in \Lambda^1$ and $b\in\Lambda^{k}$, the function $g:\R\rightarrow\R$ defined as
$$
g(t):=f(\xi+t\,a\wedge b,\eta+t\,a\lrcorner b),\text{ for all }t\in\R,
$$
is convex. Furthermore, $f$ is said to be \emph{ext-int.\ one affine} if $f$, $-f$ are both ext-int.\ one convex.
\item $f$ is \emph{ext-int.\ quasiconvex} if $f$ is locally integrable, Borel measurable and
\[
\int_{\Omega}f\left(  \xi+d\omega, \eta + \delta \omega \right)\geqslant f\left(  \xi, \eta \right)
\operatorname*{meas}\Omega,
\]
for every open, bounded set $\Omega\subseteq\R^n$, $\xi\in\Lambda^{k+1}$, $\eta\in \Lambda^{k-1} $ and $\omega\in W_{0}^{1,\infty}\left(  \Omega;\Lambda^{k}\right)$. Moreover, $f$ is said to be \emph{ext-int.\ quasiaffine} if $f$, $-f$ are both ext-int.\ quasiconvex.
\item $f$ is \emph{ext-int.\ polyconvex} if there exists a convex function
\[
F:\Lambda^{k+1}\times\cdots\times\Lambda^{(k+1)\left[ \frac{n}{k+1}\right]
}\times \Lambda^{n-k+1}\times  \cdots \times \Lambda^{(n-k+1)\left[ \frac{n}{n-k+1}\right]}\rightarrow\mathbb{R}%
\]
such that, for all $\xi\in \Lambda^{k+1}$, $\eta\in \Lambda^{k-1}$,
\[
f\left(  \xi, \eta \right)  =F\left(  \xi,\ldots,\xi^{\left[ \frac{n}{k+1}\right]
}, \ast \eta, \ldots ,(\ast\eta)^{\left[  \frac{n}{n-k+1}\right]} \right)  .
\]
Furthermore, $f$ is said to be \emph{ext-int.\ polyaffine} if $f$, $-f$ are both ext-int.\ polyconvex.
\end{enumerate}
\end{definition}
Recall that the following classes were introduced in \cite{bds}.
\begin{definition}\label{29.6.1}
\begin{enumerate}[leftmargin=*]
\item Let $1\leqslant k\leqslant n$. We say that  $f:\Lambda^k\rightarrow\R$ is \emph{ext.\ one convex} if for every $\omega\in \Lambda^k$, $a\in \Lambda^1$ and $b\in\Lambda^{k-1}$, the function $g:\R\rightarrow\R$ defined as
$$
g(t):=f(\omega+t\,a\wedge b),\text{ for all }t\in\R,
$$
is convex. Furthermore, $f$ is said to be \emph{ext.\ one affine} if $f$, $-f$ are both ext.\ one convex.
\item Let $0\leqslant k\leqslant n-1$. We say that $f:\Lambda^k\rightarrow\R$ is \emph{int.\ one convex} if for every $\omega\in \Lambda^k$, $a\in \Lambda^1$ and $b\in\Lambda^{k+1}$, the function $g:\R\rightarrow\R$ defined as
$$
g(t):=f(\omega+t\,a\lrcorner b),\text{ for all }t\in\R,
$$
is convex. Furthermore, $f$ is said to be \emph{int.\ one affine} if $f$, $-f$ are both int.\ one convex.
\end{enumerate}
\end{definition}
The notion of Hodge transform allows us to go back and forth between ext.\ one convex and int.\ one convex functions, see Remark \ref{23.4.2014.1}.
\begin{definition}[Hodge transform]
Let $0\leqslant k\leqslant n$ and let $f:\Lambda^k\rightarrow\R$. The Hodge transform of $f$ is the function $f_\ast:\Lambda^{n-k}\rightarrow\R$ defined as
$$
f_\ast(\omega):=f\left(\ast\omega\right),\text{ for all }\omega\in \Lambda^{n-k}.
$$
\end{definition}
\begin{remark}\label{23.4.2014.1}
\begin{enumerate}[leftmargin=*]
\item Evidently, every convex function is ext-int.\ polyconvex. Furthermore, using standard techniques of calculus of variations, we have the following chain of implications
\[
\text{ ext-int.\ polyconvexity}\Rightarrow\text{ ext-int.\ quasiconvexity}\Rightarrow\text{ ext-int.\ one convexity}.
\]
\item Ext-int.\ polyconvexity is equivalent to convexity when both of $k,n$ are even, or when $n\in\{2k-1,2k,2k+1\}$.
\item The duality between the aforementioned notions of convexity is reflected through the following observation. When $0\leqslant k\leqslant n-1$, $f$ is int.\ one convex if and only if $f_\ast$ is ext.\ one convex. Similarly, when $1\leqslant k\leqslant n$, $f$ is ext.\ one convex if and only if $f_\ast$ is int.\ one convex.
\item When $k=1,n-1,n$, or $k=n-2$ with $n$ odd, ext.\ one convexity is equivalent to convexity. See \cite{bds} for more details on ext.\ one convex functions.
\end{enumerate}
\end{remark}
The following lemma relates ext-int.\ one convexity with ext.\ one and int.\ one convexity.
\begin{lemma}\label{corollary of crucial lemma for extintaffine}
Let $1\leqslant k\leqslant n-1$ and let $f:\Lambda^{k+1}\times \Lambda^{k-1}\rightarrow\mathbb{R}$ be ext-int.\ one convex (resp.\ ext-int.\ one affine). Then, the following holds true
\begin{enumerate}
 \item[$(i)$] The function $f_\eta:\Lambda^{k+1}\rightarrow\R$ defined as
 $$f_\eta(\xi) := f(\xi,\eta),\text{ for all }\xi\in\Lambda^{k+1}$$
 is ext.\ one convex (resp.\ ext.\ one affine), for every $\eta \in \Lambda^{k-1}$.
 \item[$(ii)$] The function $f^\xi:\Lambda^{k-1}\rightarrow\R$ defined as
 $$f^\xi(\eta) := f(\xi,\eta),\text{ for all }\eta\in\Lambda^{k-1}$$
 is int.\ one convex (resp.\ int.\ one affine), for every $\xi \in \Lambda^{k+1}$.
\end{enumerate}
\end{lemma}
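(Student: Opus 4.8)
The plan is to deduce both statements directly from the defining convexity of the one-parameter slices of $f$, the key being to choose the auxiliary $k$-form cleverly so that exactly one of the two increments $a\wedge b$, $a\lrcorner b$ survives while the other is killed. Since ext-int.\ one affinity means ext-int.\ one convexity of both $f$ and $-f$ (and likewise for ext.\ one and int.\ one affinity), it suffices to treat the convex case throughout; the affine assertions then follow by applying the convex argument to $f$ and to $-f$ separately. The single algebraic tool I would use is the Clifford-type anticommutation identity
\[
a\lrcorner(a\wedge\beta)+a\wedge(a\lrcorner\beta)=\norm{a}^{2}\,\beta,\qquad a\in\Lambda^{1},\ \beta\in\Lambda^{p},
\]
together with the elementary relations $a\wedge a=0$ and $a\lrcorner(a\lrcorner\,\cdot)=0$.

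For $(i)$, fix $\eta\in\Lambda^{k-1}$ and take arbitrary $\xi\in\Lambda^{k+1}$, $a\in\Lambda^{1}$, $b\in\Lambda^{k}$; I must show that $t\mapsto f_{\eta}(\xi+t\,a\wedge b)$ is convex. I would set $\tilde b:=a\lrcorner(a\wedge b)\in\Lambda^{k}$. Applying the identity with $\beta=a\wedge b$ and using $a\wedge(a\wedge b)=0$ gives $a\wedge\tilde b=\norm{a}^{2}\,a\wedge b$, while $a\lrcorner\tilde b=a\lrcorner\bigl(a\lrcorner(a\wedge b)\bigr)=0$. Hence the ext-int.\ one convexity of $f$, invoked with the pair $(a,\tilde b)$, yields convexity of
\[
t\mapsto f\bigl(\xi+t\,a\wedge\tilde b,\ \eta+t\,a\lrcorner\tilde b\bigr)=f\bigl(\xi+t\,\norm{a}^{2}a\wedge b,\ \eta\bigr)=f_{\eta}\bigl(\xi+t\,\norm{a}^{2}a\wedge b\bigr).
\]
Since $\norm{a}^{2}\geqslant 0$ is a fixed scalar, an affine reparametrization of $t$ (or the trivial constant case when $a=0$) shows that $t\mapsto f_{\eta}(\xi+t\,a\wedge b)$ is convex, which is precisely ext.\ one convexity of $f_{\eta}$.

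Part $(ii)$ is the dual construction. Fixing $\xi\in\Lambda^{k+1}$ and given $\eta\in\Lambda^{k-1}$, $a\in\Lambda^{1}$, $b\in\Lambda^{k}$, I would set $\hat b:=a\wedge(a\lrcorner b)\in\Lambda^{k}$. Now $a\wedge\hat b=a\wedge a\wedge(a\lrcorner b)=0$, whereas the identity with $\beta=a\lrcorner b$ and $a\lrcorner(a\lrcorner b)=0$ gives $a\lrcorner\hat b=\norm{a}^{2}\,a\lrcorner b$. Applying ext-int.\ one convexity with $(a,\hat b)$ makes $t\mapsto f\bigl(\xi,\ \eta+t\,\norm{a}^{2}a\lrcorner b\bigr)=f^{\xi}\bigl(\eta+t\,\norm{a}^{2}a\lrcorner b\bigr)$ convex, and after the same nonnegative reparametrization this is exactly int.\ one convexity of $f^{\xi}$.

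I expect the only delicate points to be pure bookkeeping: verifying the two increment identities with the correct sign convention for the interior product $\lrcorner$, and confirming that the nonnegative scalar $\norm{a}^{2}$ does not destroy convexity (it does not, since composing a convex function with the affine map $t\mapsto\norm{a}^{2}t$ preserves convexity, and $a=0$ gives a constant slice). No genuinely hard step arises here; the entire content lies in the choice of the replacements $\tilde b=a\lrcorner(a\wedge b)$ and $\hat b=a\wedge(a\lrcorner b)$, which decouple the exterior and interior increments and thereby isolate each variable.
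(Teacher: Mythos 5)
Your proof is correct and is essentially the paper's own argument: the paper reduces $(i)$ to finding $c\in\Lambda^1$, $d\in\Lambda^k$ with $c\wedge d=a\wedge b$ and $c\lrcorner d=0$ via the Decomposition Lemma, and your choice $\tilde b=a\lrcorner(a\wedge b)$ is exactly $\norm{a}^{2}\,b_N(a/\norm{a})$ from that lemma, with the harmless scalar $\norm{a}^{2}$ absorbed by an affine reparametrization (and the dual choice $\hat b=a\wedge(a\lrcorner b)$ handles $(ii)$ the same way). No gap; the anticommutation identity you cite is just the Decomposition Lemma in disguise.
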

\begin{remark}
 The converse of Lemma \ref{corollary of crucial lemma for extintaffine} is false. This can be seen by considering the function $f:\Lambda^2\times\Lambda^0=\Lambda^2\times\R\rightarrow\R$ with $k=1$, $n=2$, defined as
 $$
 f(\xi,\eta):=(\ast\xi)\eta,\text{ for all }\xi\in\Lambda^2,\eta\in\R.
 $$
 While $f^\xi$, $f_\eta$ are affine for all $\xi\in\Lambda^{2}$ and $\eta\in\R$, $f$ is not ext-int.\ one convex. Theorem \ref{Thm principal ext-int. quasiaffine} and Corollary \ref{corollary to the main theorem} discuss how much of the converse of Lemma \ref{corollary of crucial lemma for extintaffine} is true in the category of ext-int.\ one affine functions.
\end{remark}
\begin{proof} To prove $(i)$, it is enough to see that for any $a\in\Lambda^1$, $b\in\Lambda^{k}$, there exist $c\in\Lambda^1$, $d\in\Lambda^{k}$ such that
$c\wedge d=a\wedge b$ and $c\lrcorner d=0$, which is a consequence of Lemma \ref{decomposition lemma}. One can prove $(ii)$ in the same spirit.
\end{proof}
\section{Some algebraic lemmas}\label{algebraiclemmas}
In this section, we prove few algebraic results required to prove the main theorem. The following lemma is elementary.
\begin{lemma}[Decomposition lemma]\label{decomposition lemma}
Let $1\leqslant k\leqslant n$, let $\omega\in\Lambda^k$ and let $x\in S^{n-1}$. Then, there exist $\omega_T(x)\in \Lambda^{k-1}(\{x\}^\perp)$ and $\omega_N(x)\in \Lambda^{k}(\{x\}^\perp)$
such that
$$
\omega=x\wedge \omega_T(x)+\omega_N (x).
$$
\end{lemma}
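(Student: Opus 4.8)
The plan is to produce the two components explicitly using the interior product with $x$, rather than appealing to any abstract splitting of the exterior algebra. Concretely, I would set
$$\omega_T(x) := x \lrcorner \omega \in \Lambda^{k-1} \qquad\text{and}\qquad \omega_N(x) := x \lrcorner (x \wedge \omega) \in \Lambda^{k},$$
and then verify that these do the job. The engine of the argument is the anti-derivation property of the interior product applied to the $1$-form $x$, which gives $x \lrcorner (x \wedge \omega) = (x \lrcorner x)\,\omega - x \wedge (x \lrcorner \omega)$. Since $x \in S^{n-1}$ forces $x \lrcorner x = \left\langle x;x\right\rangle = 1$, this identity collapses to $\omega_N(x) = \omega - x \wedge \omega_T(x)$, i.e.\ precisely $\omega = x \wedge \omega_T(x) + \omega_N(x)$. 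Thus, once the two pieces are named correctly, the decomposition itself is a one-line computation.

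The remaining point is to check that $\omega_T(x)$ and $\omega_N(x)$ genuinely lie in the exterior algebra of the hyperplane $\{x\}^\perp$. For this I would first record the characterization that a form $\phi$ belongs to $\Lambda^j(\{x\}^\perp)$, viewed inside $\Lambda^j(\R^n)$, if and only if $x \lrcorner \phi = 0$; this follows by completing $x$ to an orthonormal basis and observing that $x \lrcorner$ annihilates exactly the basis $j$-forms not involving the covector dual to $x$. Granting this, both membership claims are immediate from the fact that the interior product with $x$ squares to zero, since two copies of $x \lrcorner$ anticommute: $x \lrcorner \omega_T(x) = x \lrcorner x \lrcorner \omega = 0$ and $x \lrcorner \omega_N(x) = x \lrcorner x \lrcorner (x \wedge \omega) = 0$, whence $\omega_T(x) \in \Lambda^{k-1}(\{x\}^\perp)$ and $\omega_N(x) \in \Lambda^{k}(\{x\}^\perp)$.

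The only genuinely delicate step is the identification $\Lambda^j(\{x\}^\perp) = \ker(x \lrcorner) \cap \Lambda^j(\R^n)$, as this is what bridges the intrinsic notion of forms on the subspace $\{x\}^\perp$ and the purely algebraic operations on $\Lambda^\ast(\R^n)$. An alternative that bypasses naming this characterization is the coordinate proof: extend $x$ to an orthonormal basis $x, f_2, \ldots, f_n$ of $\R^n$, expand $\omega$ in the induced basis of $\Lambda^k$, and group the basis monomials according to whether or not they contain the covector dual to $x$; the first group is visibly $x \wedge (\text{a form in } \Lambda^{k-1}(\{x\}^\perp))$ and the second is a form in $\Lambda^k(\{x\}^\perp)$. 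I would favour the interior-product argument, since it is coordinate-free and matches the algebraic flavour of the rest of the paper, but the basis computation is the most transparent way to make the hyperplane membership fully rigorous, and it also renders the degenerate case $k = n$ self-evident, where $\Lambda^k(\{x\}^\perp) = \{0\}$ forces $\omega_N(x) = 0$ and $\omega = x \wedge \omega_T(x)$.
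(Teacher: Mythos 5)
Your proof is correct and is exactly the construction the paper has in mind: the paper omits the argument as elementary, but the remark immediately following the lemma records precisely your formulas, namely $\omega_T(x)=x\lrcorner\omega$ together with the annihilation conditions $x\lrcorner\omega_T(x)=0$ and $x\lrcorner\omega_N(x)=0$. The anti-derivation identity $x\lrcorner(x\wedge\omega)=\left\langle x;x\right\rangle\omega-x\wedge(x\lrcorner\omega)$ with $\left\langle x;x\right\rangle=1$ is the standard one-line verification, so nothing further is needed.
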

\begin{remark}
Note that $\omega_T(x)=x\lrcorner\omega,\,x\lrcorner\omega_T(x)=0\text{ and }x\lrcorner\omega_N(x)=0.$ In the sequel, we will write $\omega_T$ and $\omega_N$ instead of $\omega_T(e^1)$ and $\omega_N(e^1)$ respectively.
\end{remark}
The following function will have a recurrent appearance in the subsequent discussion.
\begin{definition}
Let $k,p,n\in\N$, $2\leqslant k\leqslant n$ and let us suppose that $\mathcal{D}^A\in \Lambda^{kp}$ satisfy $e^1\lrcorner \mathcal{D}^A=0$, for all $A\in\mathcal{T}^1_{k-1}$. We define $\mathcal{F}_p:\Lambda^k\times\Lambda^k\rightarrow\R$ as
{\allowdisplaybreaks
\begin{equation*}
\mathcal{F}_p(\omega,\alpha):=\sum_{A\in\mathcal{T}^1_{k-1}}\langle \mathcal{D}^A;\omega^{p-1}\wedge\alpha\rangle\langle\alpha;e^1\wedge e^A\rangle,\text{ for all }\omega,\alpha\in\Lambda^k.
\end{equation*}}
\end{definition}
The following lemma isolates the algebraic consequence of ext. one affinity.
\begin{lemma}\label{orthogonality 31.7.2012}
Let $k,n\in\N$ and let $2\leqslant k\leqslant n$. For all $J\in\mathcal{T}^1_{k-1}$, let $\mathcal{D}^J\in\Lambda^k$ satisfy
$e^1\lrcorner \mathcal{D}^J=0$ and let
\begin{equation}\label{20.8.2012.5}
\mathcal{F}_1(\omega,a\wedge b)=0,\text{ for all }\omega\in\Lambda^k,a\in\Lambda^1,b\in\Lambda^{k-1}.
\end{equation}
Then, for all $I,R\in \Tt{k-1}$ and $J,S\in \TT{k}$ satisfying $I\cap J=R\cap S=\emptyset$ and $I\cup J=R\cup S$, we have
\begin{equation}\label{21.9.2012.11052015}
\sgn{I,J}\left\langle\mathcal{D}^I;e^J\right\rangle=(-1)^k\sgn{R,S}\left\langle\mathcal{D}^R;e^S\right\rangle
\end{equation}
Hence, if either $k$ is odd or $2k>n$,
\begin{equation}\label{11.5.2015.1}
\mathcal{F}_1(\omega,\omega)=0,\text{ for all }\omega\in\Lambda^k.
\end{equation}
\end{lemma}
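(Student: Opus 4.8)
The plan is to reduce \eqref{20.8.2012.5} to a bilinear condition on decomposable forms and then read off the constraints it forces on the components $\langle\mathcal{D}^A;e^M\rangle$. Since $p=1$, the function $\mathcal{F}_1(\omega,\alpha)=\sum_{A\in\Tt{k-1}}\langle\mathcal{D}^A;\alpha\rangle\langle\alpha;e^1\wedge e^A\rangle$ is independent of $\omega$, so \eqref{20.8.2012.5} is a statement about $\alpha=a\wedge b$ alone. I would decompose along $e^1$: writing $a=a_1e^1+a'$ and $b=e^1\wedge b_T+b_N$ with $a',b_N,b_T$ supported in $\{e^1\}^\perp$, the decomposition lemma gives $e^1\lrcorner\alpha=a_1b_N-a'\wedge b_T$ and $e^1$-free part $a'\wedge b_N$; since $e^1\lrcorner\mathcal{D}^A=0$, one gets $\langle\mathcal{D}^A;\alpha\rangle=\langle\mathcal{D}^A;a'\wedge b_N\rangle$ and $\langle\alpha;e^1\wedge e^A\rangle=\langle a_1b_N-a'\wedge b_T;e^A\rangle$. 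As \eqref{20.8.2012.5} is then affine in the free scalar $a_1$, it is equivalent to the pair of identities $\sum_A\langle\mathcal{D}^A;a'\wedge b_N\rangle\langle b_N;e^A\rangle=0$ (the coefficient of $a_1$) and $\sum_A\langle\mathcal{D}^A;a'\wedge b_N\rangle\langle a'\wedge b_T;e^A\rangle=0$ (the constant term), for all such $a',b_N,b_T$ in $\{e^1\}^\perp$; I will call these (I) and (II).

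Next I would extract the two constraints. Rewriting (II) through $\langle\mathcal{D}^A;a'\wedge b_N\rangle=\langle b_N;a'\lrcorner\mathcal{D}^A\rangle$ and $\langle a'\wedge b_T;e^A\rangle=\langle b_T;a'\lrcorner e^A\rangle$ and taking $a'=e^i$, the independence of $b_N,b_T$ forces $\sum_{A\ni i}(e^i\lrcorner\mathcal{D}^A)\otimes(e^i\lrcorner e^A)=0$; the factors $e^i\lrcorner e^A=\pm e^{A\setminus\{i\}}$ are linearly independent over the $A\ni i$, so $e^i\lrcorner\mathcal{D}^A=0$ whenever $i\in A$, that is $\langle\mathcal{D}^A;e^M\rangle=0$ for every $M$ with $A\cap M\neq\emptyset$ (all overlapping components vanish). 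Polarizing (I) in $b_N$ (replacing $b_N$ by $e^P+e^Q$ and keeping the cross term) and taking $a'=e^i$ yields
\[
\sgn{i,P}\,\langle\mathcal{D}^Q;e^{[i,P]}\rangle+\sgn{i,Q}\,\langle\mathcal{D}^P;e^{[i,Q]}\rangle=0
\]
for all $i$ and $P,Q\in\Tt{k-1}$ with $i\notin P\cup Q$. After the first constraint only the disjoint instances $P\cap Q=\emptyset$ survive; putting $T=P\cup Q\cup\{i\}$ we have $[i,P]=T\setminus Q$ and $[i,Q]=T\setminus P$, so this identity relates precisely the two complementary splittings of $T$ appearing in \eqref{21.9.2012.11052015}.

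The heart of the argument, and the step I expect to be hardest, is the sign computation. Setting $\Psi(X):=\sgn{X,T\setminus X}\langle\mathcal{D}^X;e^{T\setminus X}\rangle$ for $(k-1)$-subsets $X\subseteq T$, the disjoint identity above becomes $\Psi(P)=\rho\,\Psi(Q)$ with $\rho=-\sgn{i,P}\sgn{i,Q}\sgn{P,T\setminus P}\sgn{Q,T\setminus Q}$. Writing each sign as $(-1)$ to an inversion count and simplifying with Proposition (ii)--(iv), the terms involving $i$ cancel and the cross-inversions between the two disjoint sets contribute $|P|\,|Q|=(k-1)^2$, so the exponent collapses to $1+2(k-1)+(k-1)^2\equiv k\pmod 2$ and $\rho=(-1)^k$, independently of $i,P,Q$. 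This is \eqref{21.9.2012.11052015} for splittings with disjoint $(k-1)$-parts. Any two splittings of a fixed $T$ are joined by a chain of such disjoint moves—the disjointness graph on $(k-1)$-subsets of a $(2k-1)$-set is the connected Kneser graph $K(2k-1,k-1)$—and the factor $(-1)^k$ multiplies along the chain; since a one-element swap is realized by two disjoint moves, the factors are consistent and give \eqref{21.9.2012.11052015} in general. For $k$ even every factor is $+1=(-1)^k$ and $\Psi$ is constant on each $T$-class; for $k$ odd each factor is $-1$, and since $K(2k-1,k-1)$ is non-bipartite (it has chromatic number $3$) it carries walks of both parities between any two vertices, which forces $\Psi\equiv0$. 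In either case \eqref{21.9.2012.11052015} holds.

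Finally \eqref{11.5.2015.1}. Expanding $\mathcal{F}_1(\omega,\omega)=\sum_{A,M}\langle\mathcal{D}^A;e^M\rangle\langle\omega;e^M\rangle\langle\omega;e^1\wedge e^A\rangle$, the index $M$ (not containing $1$) and the index of $e^1\wedge e^A$ (containing $1$) are always distinct, so each monomial occurs exactly once and $\mathcal{F}_1(\omega,\omega)\equiv0$ is equivalent to the vanishing of every component $\langle\mathcal{D}^A;e^M\rangle$. If $k$ is odd, the overlapping components vanish by (II) and the disjoint ones are the $\Psi$-values, which vanish by the previous paragraph, so all $\mathcal{D}^A=0$. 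If $2k>n$, then for $A\in\Tt{k-1}$ and $M\in\Tt{k}$ one has $|A|+|M|=2k-1>n-1\geq|A\cup M|$, hence $A\cap M\neq\emptyset$ always and every component is overlapping, again vanishing by (II). In both cases $\mathcal{F}_1(\omega,\omega)\equiv0$, which is \eqref{11.5.2015.1}.
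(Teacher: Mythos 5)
Your proof is correct, and while the preparatory algebra is equivalent in content to the paper's, the two decisive steps take a genuinely different route. Your identities (I) and (II), obtained by decomposing $a\wedge b$ along $e^1$ and separating the coefficient of $a_1$, carry exactly the same information as the paper's Equations \eqref{21.9.2012.2} and \eqref{22.9.2012.1}, which are extracted instead by testing \eqref{20.8.2012.5} on $b=e^R$ and $b=e^R+e^S$; and your sign $\rho=-\sgn{i,P}\sgn{i,Q}\sgn{P,T\setminus P}\sgn{Q,T\setminus Q}=-\sgn{P,i,Q}\sgn{Q,i,P}=(-1)^{k}$ checks out against Proposition 2.1. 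The first real divergence is in propagating the disjoint-move relation $\Psi(P)=(-1)^k\Psi(Q)$ to arbitrary pairs: the paper runs an induction on $\card{I\cap R}$, trading one index at a time, whereas you pass through connectivity and non-bipartiteness of the Kneser (odd) graph $K(2k-1,k-1)$, getting constancy of $\Psi$ on each $T$-class for $k$ even and $\Psi\equiv 0$ from an odd closed walk for $k$ odd. This is conceptually cleaner and makes the dichotomy between even and odd $k$ transparent, at the cost of importing standard but external facts about odd graphs; the paper's induction is self-contained. The second divergence is the case $2k>n$: the paper invokes Proposition 2.37 of \cite{CDK} on $\operatorname{rank}_{1}$ and the interior annihilator, while you observe that $|A|+|M|=2k-1>n-1\geqslant |A\cup M|$ forces every surviving component of $\mathcal{D}^A$ to overlap $A$ and hence to vanish by (II); this is more elementary and bypasses the rank machinery entirely. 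Two small points to tidy up: state explicitly that the case $1\in J\cup S$ of \eqref{21.9.2012.11052015} is trivial (both sides vanish because $e^1\lrcorner\mathcal{D}^I=0$), since your Kneser graph is built on the $(k-1)$-subsets of a $T$ not containing $1$; and the remark that a one-element swap is realized by two disjoint moves is superfluous once you have constancy (for $k$ even) or vanishing (for $k$ odd) of $\Psi$.
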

\begin{remark}
 As we will see later, forms $\mathcal{D}^J$ are connected to the coefficients of a ext.\ one affine function, which, as it will turn out, is a polynomial.
 In the proof of Theorem \ref{characterizations of affine functions}, we will see that Equation \eqref{20.8.2012.5} is basically the property of being affine in the direction of one-divisible forms in a different guise.
\end{remark}
\begin{proof}
We begin by noting that, for all $J\in\mathcal{T}^1_{k-1}$,
%\begin{equation}\label{20.8.2012.3}
$e^J\lrcorner\mathcal{D}^J=0$.
%\end{equation}
Indeed, for a fixed $R\in\mathcal{T}^1_{k-1}$, it follows from Equation \eqref{20.8.2012.5} that,
$$
0= \sum_{J\in\mathcal{T}^1_{k-1}}\langle e^R\lrcorner \mathcal{D}^J;a\rangle \left\langle e^R\lrcorner\left(e^1\wedge e^J\right); a\right\rangle=-\langle e^R\lrcorner \mathcal{D}^R;a\rangle \left\langle e^1; a\right\rangle,\text{ for all }a\in\Lambda^1.
$$
This implies that $e^R\lrcorner\mathcal{D}^R=0$. Therefore, for all $R,S\in\mathcal{T}^1_{k-1}$ with $R\neq S$, on setting $b:=e^R+e^S$, it follows from Equation \eqref{20.8.2012.5} that
{\allowdisplaybreaks
\begin{align*}
0=&\sum_{J\in\mathcal{T}^1_{k-1}}\langle \left(e^R+e^S\right)\lrcorner \mathcal{D}^J;a\rangle \left\langle \left(e^R+e^S\right)\lrcorner\left(e^1\wedge e^J\right); a\right\rangle\\
=&\langle \left(e^R+e^S\right)\lrcorner \mathcal{D}^R;a\rangle \left\langle \left(e^R+e^S\right)\lrcorner\left(e^1\wedge e^R\right); a\right\rangle\\&+ \langle \left(e^R+e^S\right)\lrcorner \mathcal{D}^S;a\rangle \left\langle \left(e^R+e^S\right)\lrcorner\left(e^1\wedge e^S\right); a\right\rangle\\
=&-\langle e^S\lrcorner \mathcal{D}^R;a\rangle \left\langle e^1; a\right\rangle- \langle e^R\lrcorner \mathcal{D}^S;a\rangle \left\langle e^1; a\right\rangle=-\langle e^S\lrcorner \mathcal{D}^R+ e^R\lrcorner \mathcal{D}^S;a\rangle \left\langle e^1; a\right\rangle,
\end{align*}}
for all $a\in\Lambda^1$. Hence, we have proved that
\begin{equation}\label{21.9.2012.2}
e^R\lrcorner\mathcal{D}^S+ e^S\lrcorner\mathcal{D}^R=0,\text{ for all }R,S\in\mathcal{T}^1_{k-1}.
\end{equation}
We now claim that, for all $J\in\mathcal{T}^1_{k-1}$,
\begin{equation}\label{22.9.2012.1}
e^j\lrcorner\mathcal{D}^J=0,\text{ for all }j\in\{1\}\cup J.
\end{equation}
To see this, let $R\in\mathcal{T}^1_{k-1}$ and let $p\in\{1\}\cup R$ be fixed. To avoid the trivial case, let us assume that $p\in R$. It is enough to prove that
\begin{equation}\label{20.8.2012.7}
\langle e^p\lrcorner\mathcal{D}^R;e^S\rangle=0,\text{ for all }S\in\mathcal{T}_{k-1}.
\end{equation}
Let $S\in\mathcal{T}_{k-1}$ be given. If $1\in S$, it follows from the hypothesis that
$
\langle e^p\lrcorner\mathcal{D}^R;e^S\rangle=0.
$
Also, if $p\in S$, we deduce that
$
\langle e^p\lrcorner\mathcal{D}^R;e^S\rangle=\langle \mathcal{D}^R;e^p\wedge e^S\rangle=0.
$
Therefore, we can assume that $1,p\notin S$. Note that, $R\neq S$ because $p\in R$. It follows from Equation \eqref{21.9.2012.2} that, as $p\in R$,
$$
0=\left\langle e^S\lrcorner \mathcal{D}^R+ e^R\lrcorner \mathcal{D}^S;e^p\right\rangle
=\left\langle\mathcal{D}^R; e^p\wedge e^S\right\rangle=\left\langle e^p\lrcorner\mathcal{D}^R;e^S\right\rangle,
$$
which proves Equation \eqref{20.8.2012.7}. It remains to prove Equation \eqref{21.9.2012.11052015}. To avoid the trivial case, we assume that $1\notin J\cup S$. Let us now write
{\allowdisplaybreaks
\begin{align*}
I:=(i_1,\ldots,i_{k-1});&\,J:=(j_1,\ldots,j_{k}),\\
R:=(r_1,\ldots,r_{k-1});&\,S:=(s_1,\ldots,s_{k}).
\end{align*}
}
Note that, using Equation \eqref{21.9.2012.2}, we deduce that, for all $P,Q\in\Tt{k-1}$ and $r\in\{1,\ldots,n\}$,
\begin{equation}\label{21.9.2012.5}
\left\langle\mathcal{D}^P; e^Q\wedge e^r\right\rangle+\left\langle\mathcal{D}^Q; e^P\wedge e^r\right\rangle=0.
\end{equation}
We prove Equation \eqref{21.9.2012.11052015} by induction on $\card{I\cap R}$. First, let us prove Equation \eqref{21.9.2012.11052015} when $\card{I\cap R}=0$ i.e. $I\cap R=\emptyset$. In this case, for some $1\leqslant p,q\leqslant k$, we have  $I=\left(s_1,\ldots,s_{\hat{p}},\ldots,s_k\right)=S(s_p)$ and $R=\left(j_1,\ldots,j_{\hat{q}},\ldots,j_k\right)=J(j_q)$, with $s_p=j_q$. Therefore, it follows from Equation \eqref{21.9.2012.5} that
{\allowdisplaybreaks
\begin{align}\label{21.9.2012.6}
\left\langle\mathcal{D}^I; e^J\right\rangle=& (-1)^{k-q}\left\langle\mathcal{D}^{S(s_p)}; e^{J(j_q)}\wedge e^{j_q}\right\rangle
= (-1)^{k-q+1}\left\langle\mathcal{D}^{J(j_q)}; e^{S(s_p)}\wedge e^{j_q}\right\rangle\notag\\ =& (-1)^{k-q+1}\left\langle\mathcal{D}^{R}; e^{S(s_p)}\wedge e^{s_p}\right\rangle=(-1)^{p+q+1}\left\langle\mathcal{D}^{R}; e^{S}\right\rangle.
\end{align}
}
Furthermore, we observe that
\begin{equation}\label{21.9.2012.7}
\sgn{I,J}=(-1)^{q+1+k-p}\sgn{R,S}.
\end{equation}
Combining Equations \eqref{21.9.2012.6} and \eqref{21.9.2012.7}, Equation \eqref{21.9.2012.11052015} follows when $\card{I\cap R}=0$. Let us now assume that Equation \eqref{21.9.2012.11052015} holds true when $\card{I\cap R}=0,\ldots,p$, for some $p\in\{0,\ldots,k-1\}$. We prove Equation \eqref{21.9.2012.11052015} when $\card{I\cap R}=p+1$, where $p+1\leqslant k-1$. Since $J\setminus(I\cup R),\,S\setminus(I\cup R)\neq\emptyset$, let us choose $1\leqslant\mu\leqslant k-1$ and $1\leqslant\nu\leqslant k$ such that $i_\mu\in I\cap R$ and $j_\nu\in J\setminus(I\cup R)$. Clearly $i_\mu\neq j_\nu$. It follows from Equation \eqref{21.9.2012.5} that
{\allowdisplaybreaks
\begin{align}\label{21.9.2012.8}
\left\langle\mathcal{D}^I; e^J\right\rangle=& (-1)^{k-\nu}\left\langle\mathcal{D}^{I}; e^{J(j_\nu)}\wedge e^{j_{\nu}}\right\rangle
= (-1)^{k-\nu+1}\left\langle\mathcal{D}^{J(j_\nu)}; e^{I}\wedge e^{j_\nu}\right\rangle\notag\\
=&  (-1)^{\mu+\nu}\sgn{I(i_\mu),j_\nu}\sgn{J(j_\nu),i_\mu} \left\langle\mathcal{D}^{\left[I(i_\mu);{j_\nu}\right]}; e^{\left[J(j_\nu);{i_\mu}\right]}\right\rangle.
\end{align}
}
Since $$\left(I(i_\mu)\cup\{j_\nu\} \right)\cup \left(J(j_\nu)\cup\{i_\mu\} \right)=R\cup S, \left(I(i_\mu)\cup\{j_\nu\} \right)\cap \left(J(j_\nu)\cup\{i_\mu\} \right)=\emptyset,$$ and $\card{\left(I(i_\mu)\cup\{j_\nu\} \right)\cap R}=p$, it follows from the induction hypothesis that

\begin{equation}\label{21.9.2012.9}
\sgn{\left[I(i_\mu),{j_\nu}\right],\left[J(j_\nu);{i_\mu}\right]}\left\langle\mathcal{D}^{\left[I(i_\mu);{j_\nu}\right]}; e^{\left[J(j_\nu);{i_\mu}\right]}\right\rangle=(-1)^k\sgn{R,S}\left\langle\mathcal{D}^R; e^S\right\rangle.
\end{equation}
On noting that
$$
\sgn{\left[I(i_\mu),{j_\nu}\right],\left[J(j_\nu),{i_\mu}\right]}=(-1)^{\mu+\nu}\sgn{I,J} \sgn{I(i_\mu),{j_\nu}}\sgn{J(j_\nu),{i_\mu}},
$$
it follows from Equations \eqref{21.9.2012.8} and \eqref{21.9.2012.9} that
$$
\left\langle\mathcal{D}^I; e^J\right\rangle=(-1)^{k}\frac{\sgn{R,S}}{\sgn{I,J}}\left\langle\mathcal{D}^R; e^S\right\rangle,
$$
which proves the induction step. This proves Equation \eqref{21.9.2012.11052015}. To prove Equation \eqref{11.5.2015.1}, it is enough to prove that $\mathcal{D}^J=0$, for all $J\in\mathcal{T}^1_{k-1}$. If $k$ is odd, this follows from Equations \eqref{21.9.2012.11052015} and \eqref{22.9.2012.1}. When $2k>n$, let us assume to the contrary that $\mathcal{D}^J\neq 0$, for some $J\in\mathcal{T}^1_{k-1}$. Therefore, $\text{\upshape{rank}}_{1}\left(\mathcal{D}^J\right)\geqslant k$, see Proposition 2.37 of \cite{CDK}. Furthermore, using Equation \eqref{22.9.2012.1}, we deduce that
$$
\{e^1,e^r:r\in J\}\subseteq\text{\upshape{Ann}}_{\lrcorner}\left(\mathcal{D}^J,1\right).
$$
Therefore,
$$
k\leqslant \text{\upshape{rank}}_{1}\left(\mathcal{D}^J\right)=n-\operatorname*{dim}\left(\text{\upshape{Ann}}_{\lrcorner}\left(\mathcal{D}^J,1\right)\right)\leqslant n-|J|-1=n-k.
$$
This implies that $2k\leqslant n$ which is a contradiction. Hence $\mathcal{D}^J=0$. This proves the lemma.
\end{proof}
\begin{lemma}\label{23.9.2012.1}
Let $k,p,n\in\N$, $k\geqslant 2$, let $\mathcal{D}^A\in \Lambda^{kp}$ satisfy $e^1\lrcorner \mathcal{D}^A=0$, for all $A\in\mathcal{T}^1_{k-1}$, and let
\begin{equation}\label{23.9.2012.2}
\mathcal{F}_p(\omega,a\wedge b)=0,\text{ for all }\omega\in\Lambda^k,a\in\Lambda^1,b\in\Lambda^{k-1}.
\end{equation}
Then, for some $H_p\in\Lambda^{kp+k-1}$ with $e^1\lrcorner H_p=0$,
\begin{equation}\label{11.5.2015.2}
\mathcal{F}_p(\omega,\omega)=\langle e^1\wedge H_p;\omega^{p+1}\rangle,\text{ for all }\omega\in\Lambda^k.
\end{equation}
\end{lemma}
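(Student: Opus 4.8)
The plan is to combine an explicit decomposition along $e^1$ with the orthogonality relations furnished by Lemma \ref{orthogonality 31.7.2012}. The starting observation is that, for each fixed $\omega$, the forms $\mathcal{E}^A := \omega^{p-1}\lrcorner\mathcal{D}^A \in \Lambda^k$ satisfy $e^1\lrcorner\mathcal{E}^A = 0$ (since $e^1\lrcorner\mathcal{D}^A = 0$), and the hypothesis \eqref{23.9.2012.2} is exactly \eqref{20.8.2012.5} written for the family $\{\mathcal{E}^A\}$, because $\langle\mathcal{D}^A;\omega^{p-1}\wedge(a\wedge b)\rangle = \langle\mathcal{E}^A; a\wedge b\rangle$. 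Hence Lemma \ref{orthogonality 31.7.2012} applies to $\{\mathcal{E}^A\}$ for every $\omega$, so that the relations \eqref{22.9.2012.1} and \eqref{21.9.2012.11052015} hold for the family $\{\mathcal{E}^A\}$. In particular, if $k$ is odd or $2k>n$, the lemma forces $\mathcal{E}^A = 0$ for all $\omega$, whence $\mathcal{F}_p(\omega,\omega) = \sum_{A}\langle\mathcal{E}^A;\omega\rangle\langle\omega; e^1\wedge e^A\rangle = 0$ and one may take $H_p = 0$. The substantive case is $k$ even with $2k\leqslant n$.

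Next I would make both sides explicit through the decomposition $\omega = e^1\wedge\omega_T + \omega_N$ of Lemma \ref{decomposition lemma}. Since $e^1\lrcorner\mathcal{D}^A = 0$, writing $\omega^p = \omega_N^p + e^1\wedge(e^1\lrcorner\omega^p)$ annihilates the second summand against $\mathcal{D}^A$, so $\langle\mathcal{D}^A;\omega^p\rangle = \langle\mathcal{D}^A;\omega_N^p\rangle$ and therefore $\mathcal{F}_p(\omega,\omega) = \sum_{A}(\omega_T)_A\langle\mathcal{D}^A;\omega_N^p\rangle$, where $(\omega_T)_A = \langle\omega; e^1\wedge e^A\rangle$. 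On the other side, expanding $\omega^{p+1}$ and using $(e^1\wedge\omega_T)^2 = 0$ gives $e^1\lrcorner\omega^{p+1} = (p+1)\,\omega_T\wedge\omega_N^p$ when $k$ is even, whence $\langle e^1\wedge H_p;\omega^{p+1}\rangle = (p+1)\langle H_p;\omega_T\wedge\omega_N^p\rangle$. Comparing the two expressions, the lemma reduces to producing a single $H_p\in\Lambda^{kp+k-1}$ with $e^1\lrcorner H_p = 0$ such that
\[
\langle\mathcal{D}^A;\psi^p\rangle = (p+1)\,\langle e^A\lrcorner H_p;\psi^p\rangle\qquad\text{for all }A\in\Tt{k-1},\ \psi\in\Lambda^k(\{e^1\}^\perp).
\]

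To construct $H_p$ I would mimic the case $p=1$, where one checks directly that $\langle H_1; e^M\rangle := \tfrac12\sgn{A,J}\langle\mathcal{D}^A; e^J\rangle$ (for any splitting $M = [A,J]$ with $A\in\Tt{k-1}$, $J\in\TT{k}$) is forced to be well defined precisely by \eqref{21.9.2012.11052015} when $k$ is even, and that this $H_1$ satisfies the displayed identity once \eqref{22.9.2012.1} is used to restrict to $A\cap J = \emptyset$. For general $p$ the analogue is to define $H_p$ by the common value of $\sgn{A,L}\langle\mathcal{D}^A; e^L\rangle$ over splittings $M = [A,L]$ with $L\in\Tt{kp}$. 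The genuinely new feature is that the powers $\psi^p$ now span only a proper subspace of $\Lambda^{kp}(\{e^1\}^\perp)$, so $H_p$ is no longer uniquely determined and the displayed identity must be verified only on that power subspace.

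The main obstacle is exactly this verification: one must show that \eqref{21.9.2012.11052015}, which a priori controls only the contractions $\langle\mathcal{D}^A;\omega^{p-1}\wedge e^J\rangle$ of the degree matching $\mathcal{E}$, propagates to the symmetrized identity on $p$-th powers. Concretely, I expect to write $\langle\mathcal{D}^A;\psi^p\rangle = \sum_{L}\langle\mathcal{D}^A; e^L\rangle\langle\psi^p; e^L\rangle$, expand $\langle\psi^p; e^L\rangle$ as a signed sum over ordered partitions of $L$ into $p$ blocks of size $k$, and then apply \eqref{21.9.2012.11052015} block by block together with the full symmetry of $\psi^p$ to move the distinguished index block $A$ into position; the bookkeeping of $\operatorname*{sgn}$ over these order-preserving rearrangements of several ordered multi-indices is exactly the technique advertised in the introduction, and it is where all the combinatorial difficulty is concentrated. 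Equivalently, one reduces to the consistency statement that whenever $\chi_A\in\operatorname*{span}\{\psi^p\}$ satisfy $\sum_{A}e^A\wedge\chi_A = 0$ one has $\sum_{A}\langle\mathcal{D}^A;\chi_A\rangle = 0$; this is precisely what guarantees that the overdetermined-looking system for $H_p$ is solvable.
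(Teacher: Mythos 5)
Your reduction is sound as far as it goes: applying Lemma \ref{orthogonality 31.7.2012} to the family $\mathcal{E}^A:=\omega^{p-1}\lrcorner\mathcal{D}^A$ disposes of the cases $k$ odd and $2k>n$, and the decomposition $\omega=e^1\wedge\omega_T+\omega_N$ correctly reduces \eqref{11.5.2015.2} for $k$ even to producing $H_p$ with $\langle\mathcal{D}^A;\psi^p\rangle=(p+1)\langle e^A\lrcorner H_p;\psi^p\rangle$ for all $A\in\Tt{k-1}$ and $\psi\in\Lambda^k(\{e^1\}^\perp)$; this matches the skeleton of the paper's argument, and your componentwise formula for $H_p$ is the one the paper uses. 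But the step you yourself flag as the ``main obstacle'' --- upgrading the relations \eqref{21.9.2012.11052015} for the contracted forms $\mathcal{E}^A$ to a consistency statement strong enough to make $H_p$ well defined and to verify the identity --- is the actual content of the lemma, and you leave it as a plan (``I expect to write \dots apply block by block \dots'') rather than a proof. That is a genuine gap at the heart of the argument.

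Moreover, your framing of that obstacle rests on a false premise whose correction is precisely what closes the gap. You assert that the powers $\psi^p$ span only a proper subspace of $\Lambda^{kp}(\{e^1\}^\perp)$, so that the identity need only be checked on a ``power subspace'' and one must solve an overdetermined system there. In fact, for $k$ even every basis element is a power: given $Q\in\mathcal{T}_{(p-1)k}$, the sum of the $p-1$ consecutive $k$-blocks of $Q$ has $(p-1)$-st power a nonzero multiple of $e^Q$ (repeated blocks die, and evenness of $k$ makes the distinct ones commute), and likewise every $e^L$ with $L\in\mathcal{T}_{kp}$ is a multiple of some $\psi^p$. Consequently the hypothesis \eqref{23.9.2012.2} polarizes completely: it yields \eqref{20.8.2012.5} for each family $\left\{e^Q\lrcorner\mathcal{D}^A\right\}_{A}$ separately, and chaining the resulting exchange relations over varying $Q$ gives the full componentwise consistency $\sgn{I,J}\langle\mathcal{D}^I;e^J\rangle=\sgn{R,S}\langle\mathcal{D}^R;e^S\rangle$ for all admissible splittings with $J,S\in\mathcal{T}_{kp}$. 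With that in hand, $H_p$ is defined exactly as in your $p=1$ discussion, the target identity amounts to the identity of forms $\mathcal{D}^A=(p+1)\,e^A\lrcorner H_p$ with no subspace restriction, and the verification is the short computation with $\omega_T\wedge(\omega^p)_N$ that you already set up. Your block-by-block symmetrization over ordered partitions might be forced through, but as written it neither establishes the needed consistency nor identifies the realizability of $e^Q$ as a $(p-1)$-st power, which is the one idea the proof cannot do without.
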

\begin{proof}
Let us begin by proving that if $k$ is even, for all $I,R\in \Tt{k-1}$ and $J,S\in \TT{kp}$ satisfying $I\cap J=R\cap S=\emptyset$ and $I\cup J=R\cup S$,
we have
\begin{equation}\label{23.9.2012.3}
\sgn{I,J}\left\langle\mathcal{D}^I;e^J\right\rangle=\sgn{R,S}\left\langle\mathcal{D}^R;e^S\right\rangle.
\end{equation}
The proof is very similar to that of Equation \eqref{21.9.2012.11052015} of Lemma \ref{orthogonality 31.7.2012}. To avoid the trivial case, let us assume that $kp\leqslant n$. If $p=1$, Equation \eqref{23.9.2012.3} follows from Lemma \ref{orthogonality 31.7.2012}. So, we assume $p\geqslant 2$. At the outset, let us observe that for all $Q\in\TT{(p-1)k}$, there exists $\omega\in\Lambda^k$ satisfying
\begin{equation}\label{23.9.2012.4}
\omega^{p-1}=e^Q.
\end{equation}
Indeed, for $Q:=\left(q_1,\ldots,q_{(p-1)k}\right)\in\TT{(p-1)k}$, the form $\omega\in\Lambda^k$ defined by
$$
\omega:=\frac{1}{(p-1)!}\sum_{r=0}^{p-2}e^{q_{rk+1}}\wedge\cdots\wedge e^{q_{(r+1)k}},
$$
satisfies Equation \eqref{23.9.2012.4}. Therefore, it follows from Equations \eqref{23.9.2012.2} and \eqref{23.9.2012.4} that, for all $a\in\Lambda^1$, $b\in\Lambda^{k-1}$ and $Q\in\TT{(p-1)k}$,
\begin{equation}\label{24.9.2012.2}
\sum_{A\in\mathcal{T}^1_{k-1}}\langle e^Q\lrcorner \mathcal{D}^A;a\wedge b\rangle\langle a\wedge b;e^1\wedge e^A\rangle=0.
\end{equation}
The rest of the proof of Equation \eqref{23.9.2012.3} follows essentially from Lemma \ref{orthogonality 31.7.2012} and its proof. Note that,
\begin{equation}\label{24.9.2012.1}
e^i\lrcorner\mathcal{D}^I=0,\text{ for all }i\in\{1\}\cup I,I\in\mathcal{T}^1_{k-1}
\end{equation}
It remains to prove Equation \eqref{11.5.2015.2}. To avoid the trivial case, we assume $kp\leqslant n$. When $k$ is odd, $\mathcal{F}_p$ is evidently zero on the diagonal when $p\geqslant 2$. Hence, one can take $H_p=0$ in this case. When $p=1$ and $k$ is odd, it follows from Lemma \ref{orthogonality 31.7.2012} that $\mathcal{F}_1$ is zero on the diagonal. Therefore, we can set $H_1=0$ in this case as well. Hence, it is enough to settle the lemma for the case when $k$ is even. To define $H_p\in\Lambda^{kp+k-1}$, using Equation \eqref{23.9.2012.3}, we note that, for all $R\in\mathcal{T}^1_{k-1},S\in\mathcal{T}^1_{kp}$ and $R\cap S=\emptyset$, there exists $\alpha_{R\cup S}\in\R$ such that
\begin{equation}\label{19.9.2012.1}
\left\langle \mathcal{D}^R;e^S\right\rangle=\alpha_{R\cup S}\operatorname*{sgn}(R,S).
\end{equation}
Let us now define $H_p\in \Lambda^{kp+k-1}$ by
$$
H_p:=\frac{1}{p+1}\sum_{I\in\mathcal{T}^1_{kp+k-1}}\alpha_I e^I.
$$
It follows from Equation \eqref{19.9.2012.1} that $H_p$ is well-defined. Note that, $e^1\lrcorner H_p=0$. Furthermore, for all $\omega\in\Lambda^k$, it follows from Equation \eqref{24.9.2012.1} that
{\allowdisplaybreaks
\begin{align}
\mathcal{F}_{p}(\omega,\omega)=& \sum_{R\in\mathcal{T}^1_{k-1}}\langle \mathcal{D}^R;\omega^{p}\rangle\langle\omega;e^1\wedge e^R\rangle\notag\\
=& \sum_{R\in\mathcal{T}^1_{k-1}}\left(\sum_{S\in\mathcal{T}^1_{kp},R\cap S=\emptyset}\left\langle \mathcal{D}^R;e^S\right\rangle\left\langle\omega^{p};e^S\right\rangle\right)\left\langle\omega; e^1\wedge e^R\right\rangle\notag\\
=& \sum_{R\in\mathcal{T}^1_{k-1}}\left(\sum_{S\in\mathcal{T}^1_{kp},R\cap S=\emptyset}\alpha_{R\cup S}\operatorname*{sgn}(R,S)\left\langle\omega^{p};e^S\right\rangle\right)\left\langle\omega; e^1\wedge e^R\right\rangle\notag\\
=& \sum_{I\in\mathcal{T}^1_{k-1+kp}}\alpha_{I}\left(\sum_{\substack{R\in\mathcal{T}^1_{k-1},S\in\mathcal{T}^1_{kp},\\R\cup S=I,R\cap S=\emptyset}}\operatorname*{sgn}(R,S)\left\langle\left(\omega^{p}\right)_N;e^S\right\rangle\left\langle\omega_T; e^R\right\rangle\right)\notag\\
=& \sum_{I\in\mathcal{T}^1_{k-1+kp}}\alpha_{I}\left\langle \omega_T\wedge \left(\omega^{p}\right)_N; e^I\right\rangle = (p+1)\left\langle H_p; \omega_T\wedge \left(\omega^{p}\right)_N\right\rangle .\notag
\end{align}
}
Since $e^1\lrcorner H_p=0$ and $k$ is even, it follows that,
$$
\mathcal{F}_p(\omega,\omega)= (p+1)\left\langle H_p; \omega_T\wedge \left(\omega^{p}\right)_N\right\rangle=\left\langle e^1\wedge H_p;\omega^{p+1}\right\rangle, \text{ for all }\omega\in\Lambda^k,
$$
which proves the lemma.
\end{proof}
\section{Characterization of ext-int.\ one affine functions}\label{main}
Let us begin by characterizing ext.\ one affine functions.
\begin{theorem}\label{characterizations of affine functions}
Let $1\leqslant k\leqslant n$ and let $f:\Lambda^k\rightarrow\R$. Then, $f$ is ext.\ one affine if and only if there exist $m\in\N$ with $m\leqslant n$, $a_r\in\Lambda^{kr}$, where $r=0,\ldots,m$ such that
\begin{equation}\label{24.9.2012.4}
f(\omega)=\sum_{r=0}^{m}\langle a_r;\omega^r \rangle,\text{ for all }\omega\in\Lambda^k.
\end{equation}
\end{theorem}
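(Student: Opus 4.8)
The plan is to treat the two implications separately: the forward (sufficiency) direction is routine, while the converse I would obtain by an induction on $n$ driven by the Decomposition Lemma and the algebraic lemmas of Section~\ref{algebraiclemmas}.

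For sufficiency, suppose $f(\omega)=\sum_{r=0}^{m}\langle a_r;\omega^r\rangle$ and fix $\omega\in\Lambda^k$, $a\in\Lambda^1$, $b\in\Lambda^{k-1}$. The crucial observation is that $(a\wedge b)\wedge(a\wedge b)=0$: commuting one copy of $a$ past $b$ turns $a\wedge b\wedge a\wedge b$ into $\pm\,a\wedge a\wedge b\wedge b$, which vanishes since $a\wedge a=0$. Hence the binomial expansion of $(\omega+t\,a\wedge b)^r$ truncates after the linear term, $(\omega+t\,a\wedge b)^r=\omega^r+rt\,\omega^{r-1}\wedge(a\wedge b)$, so $t\mapsto f(\omega+t\,a\wedge b)$ is affine and $f$ is ext.\ one affine.

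For necessity I would induct on $n$, for all $1\leqslant k\leqslant n$ at once. When $k\in\{1,n-1,n\}$, or $k=n-2$ with $n$ odd, ext.\ one convexity coincides with convexity (Remark~\ref{23.4.2014.1}), so an ext.\ one affine $f$ is genuinely affine and the representation holds with $m=1$; these cases furnish the base $n=k$. For the inductive step, assume $1\leqslant k\leqslant n-1$ and apply Lemma~\ref{decomposition lemma} with $x=e^1$ to write $\omega=e^1\wedge\omega_T+\omega_N$, where $\omega_T=e^1\lrcorner\omega\in\Lambda^{k-1}(\{e^1\}^\perp)$ and $\omega_N\in\Lambda^k(\{e^1\}^\perp)$. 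Since the direction $e^1\wedge b$ changes only $\omega_T$ and fixes $\omega_N$, ext.\ one affinity forces $\omega_T\mapsto f(e^1\wedge\omega_T+\omega_N)$ to be affine, which (using $\langle\omega;e^1\wedge e^A\rangle=\langle\omega_T;e^A\rangle$) yields $f(\omega)=f(\omega_N)+\sum_{A\in\Tt{k-1}}\psi_A(\omega_N)\langle\omega;e^1\wedge e^A\rangle$. The restriction $f|_{\Lambda^k(\{e^1\}^\perp)}$ is ext.\ one affine over $\mathbb{R}^{n-1}$, and a finite-difference extraction of $\psi_A(\omega_N)=f(e^1\wedge e^A+\omega_N)-f(\omega_N)$ shows each coefficient $\psi_A$ is ext.\ one affine as well. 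The induction hypothesis then gives $f(\omega_N)=\sum_p\langle a_p;\omega_N^p\rangle$ and $\psi_A(\omega_N)=\sum_p\langle\mathcal{D}^{A,p};\omega_N^p\rangle$ with $a_p,\mathcal{D}^{A,p}\in\Lambda^{kp}(\{e^1\}^\perp)$, so in particular $e^1\lrcorner\mathcal{D}^{A,p}=0$.

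It remains to reassemble these pieces into pure wedge powers. Since $a_p$ and $\mathcal{D}^{A,p}$ annihilate $e^1$, one may replace $\omega_N$ by $\omega$ in every scalar product, and the homogeneous part of $f$ of degree $p+1$ coming from the coefficient sum is exactly $\mathcal{F}_{p}(\omega,\omega)$ for the family $\{\mathcal{D}^{A,p}\}_A$. The key point, and the step I expect to be the main obstacle, is to verify that ext.\ one affinity of $f$ in a \emph{general} direction $a\wedge b$ is equivalent, degree by degree, to the vanishing of the cross terms $\mathcal{F}_{p}(\omega,a\wedge b)=0$; this requires computing the second-order coefficient in $t$ of $f(\omega+t\,a\wedge b)$ and matching it to the bilinear form $\mathcal{F}_p$ through the order-preserving multi-index sign calculus recorded in the Proposition of Section~\ref{notation} (parts $(ii)$--$(v)$). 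Granting this, Lemma~\ref{23.9.2012.1} (with its prototype Lemma~\ref{orthogonality 31.7.2012}) converts each diagonal value into $\mathcal{F}_{p}(\omega,\omega)=\langle e^1\wedge H_p;\omega^{p+1}\rangle$, so every homogeneous piece of $f$ has the form $\langle a_r;\omega^r\rangle$. Summing and collecting constants gives $f(\omega)=\sum_r\langle a_r;\omega^r\rangle$, and since $\omega^r\in\Lambda^{kr}$ vanishes once $kr>n$, only finitely many terms survive, so one may take $m\leqslant n$.
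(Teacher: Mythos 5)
Your proposal follows essentially the same route as the paper's proof: induction on $n$ with base case $n=k$, decomposition $\omega=e^1\wedge\omega_T+\omega_N$, affinity in the directions $e^1\wedge b$ to extract the coefficients $\psi_A$, the induction hypothesis applied to $f|_{\Lambda^k(\{e^1\}^\perp)}$ and its translates, and Lemmas \ref{orthogonality 31.7.2012} and \ref{23.9.2012.1} to rewrite the diagonal terms $\mathcal{F}_p(\omega,\omega)$ as $\langle e^1\wedge H_p;\omega^{p+1}\rangle$. The one step you flag as a possible obstacle is handled in the paper exactly as you describe: the coefficient of $t^2$ in $f(\omega+t\,c\wedge d)$ is $\sum_r r\,\mathcal{F}_r(\omega,c\wedge d)$ (since $(c\wedge d)\wedge(c\wedge d)=0$ truncates the expansion), and separating by degree of homogeneity in $\omega$ gives $\mathcal{F}_r(\omega,c\wedge d)=0$ for each $r$, so there is no gap.
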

\begin{remark} Note that, since $\omega^r=0$ for all $r>\left[\frac{n}{k}\right]$, it follows that $m\leqslant \left[\frac{n}{k}\right]$.
\end{remark}
\begin{proof} We show that any ext.\ one affine function $f:\Lambda^k\rightarrow\R$ is of the form \eqref{24.9.2012.4}. The converse part is easy to check. In view of Remark \ref{23.4.2014.1}, let us assume $k\geqslant 2$. The proof proceeds by induction on the dimension $n$. When $n=k$, the result follows easily. Let us assume that the theorem holds true when $n=k,\ldots,p$, for some $p\geqslant k$. We prove the result for $n=p+1$. It is given that $f:\Lambda^k\left(\R^{p+1}\right)\rightarrow\R$ is  ext.\ one affine.
\\ \\
Since $f$ is ext.\ one affine,
{\allowdisplaybreaks
\begin{align}\label{11.9.2012.1}
f(\omega)=&f\left(\omega_N\right)+\sum_{J\in\mathcal{T}^1_{k-1}}\omega_{1,J}\left( f\left(\omega_N+e^1\wedge e^J\right)-f\left(\omega_N\right)\right)\notag\\
=&f\left(\omega_N\right)+\sum_{J\in\mathcal{T}^1_{k-1}}\omega_{1,J}\left( f_{e^1\wedge e^J}\left(\omega_N\right)-f\left(\omega_N\right)\right),\text{ for all }\omega\in \Lambda^k,
\end{align}}
where, for all $J\in\mathcal{T}^1_{k-1}$, $f_{e^1\wedge e^J}:\Lambda^{k}\left(\{e^1\}^\perp\right)\rightarrow\R$ is defined as
$$
f_{e^1\wedge e^J}(\xi):=f\left(\xi+e^1\wedge e^J\right),\text{ for all }\xi\in \Lambda^{k}\left(\{e^1\}^\perp\right).
$$
Since $f:\Lambda^k\left(\R^{p+1}\right)\rightarrow\R$ is ext.\ one affine,  so are $f|_{\Lambda^{k}\left(\{e^1\}^\perp\right)}$ and $\,f_{e^1\wedge e^J}$, for all $J\in\mathcal{T}^1_{k-1}$. Therefore, the  induction hypothesis ensures the existence of  $m(p,k)\in\N$, $m(p,k)\leqslant p$ and $a^0_0,a^J_0\in\R$, $a^0_r,a^J_r\in\Lambda^{kr}\left(\{e^1\}^\perp\right)$ for all $J\in\mathcal{T}^1_{k-1}$ and $r=1,\ldots,m(p,k)$, satisfying
\begin{equation*}\label{12.9.2012.1}
f(\varphi)=a^0_0+\sum_{r=1}^{m(p,k)}\langle a^0_r;\varphi^r\rangle,\text{ for all }\varphi \in \Lambda^k\left(\{e^1\}^\perp\right),
\end{equation*}
and
\begin{equation*}\label{12.9.2012.2}
f_{e^1\wedge e^J}(\varphi)=a^J_0+\sum_{r=1}^{m(p,k)}\langle a^J_r;\varphi^r\rangle,\text{ for all }\varphi \in \Lambda^k\left(\{e^1\}^\perp\right).
\end{equation*}
Thus, it follows from Equation \eqref{11.9.2012.1} that, for all $\omega\in\Lambda^k$,
{\allowdisplaybreaks
\begin{align}\label{12.9.2012.3}
f(\omega)=& \left(a^0_0+\sum_{r=1}^{m(p,k)}\langle a^0_r;\omega_N^r\rangle \right) +\sum_{J\in\mathcal{T}^1_{k-1}}\omega_{1,J}\left( a^J_0-a^0_0+ \sum_{r=1}^{m(p,k)}\langle a^J_r-a^0_r;\omega_N^r\rangle\right)\notag\\
=& \left(a^0_0+\sum_{r=1}^{m(p,k)}\langle\overline{a}^0_r;\omega^r\rangle \right) + \sum_{J\in\mathcal{T}^1_{k-1}}\sum_{r=1}^{m(p,k)}\langle \mathcal{D}^J_r;\omega^r\rangle\left\langle\omega;e^1\wedge e^J\right\rangle\notag\\
=& \left(a^0_0+\sum_{r=1}^{m(p,k)}\langle\overline{a}^0_r;\omega^r\rangle \right) + \sum_{r=1}^{m(p,k)}\mathcal{F}_r(\omega,\omega),
\end{align}}
where
$$
\overline{a}^0_r:=\left\{\begin{array}{rl}
a^0_1+e^1\wedge\left(\sum_{J\in\mathcal{T}^1_{k-1}}\left(a^J_0-a^0_0\right)e^J\right),&\text{ if }r=1,\\
a^0_r,&\text{ if }r=2,\ldots,m(p,k).
\end{array}\right.
$$
and
$$
\mathcal{D}^J_r:=a^J_r-a^0_r,\text{ for all }J\in\mathcal{T}^1_{k-1}\text{ and }r=1,\ldots,m(p,k).
$$
Note that, for all $J\in\mathcal{T}^1_{k-1}$, and $r=1,\ldots,m(p,k)$, $e^1\lrcorner\mathcal{D}^J_{r}=0$. Since $f$ is ext.\ one affine,
\begin{equation*}
\sum_{r=1}^{m(p,k)}r\mathcal{F}_r(\omega,c\wedge d)=0,\text{ for all }\omega\in\Lambda^k,c\in\Lambda^{1},d\in\Lambda^{k-1}.
\end{equation*}
Hence, by different degree of homogeneity, for all $r=1,\ldots,m(p,k)$,
$$
\mathcal{F}_r(\omega,c\wedge d)=0,\text{ for all }\omega\in\Lambda^{k},c\in\Lambda^1,d\in\Lambda^{k-1}.
$$
We invoke Lemma \ref{23.9.2012.1} at this point to find $G_r\in\Lambda^{kr+k}$, for all $r=1,\ldots,m(p,k)$,
\begin{equation*}
\mathcal{F}_r(\omega,\omega)=\langle G_r;\omega^{r+1}\rangle,\text{ for all }\omega\in\Lambda^k,
\end{equation*}
from where the result follows using Equation \eqref{12.9.2012.3}. This completes the proof.
\end{proof}
\\ \\
Invoking the Hodge transform, see Remark \ref{23.4.2014.1}, the characterization of int.\ one affine functions follows immediately from Theorem \ref{characterizations of affine functions}.
\begin{corollary}\label{characterizations of intaffine functions}
Let $0\leqslant k\leqslant n-1$ and let $f:\Lambda^k\rightarrow\R$. Then, $f$ is int.\ one affine if and only if there exists $a_r\in\Lambda^{(n-k)r}$, for all $r=0,\ldots,\left[\frac{n}{n-k}\right]$, such that
\begin{equation}\label{24.9.2015.4}
f(\omega)=\sum_{r=0}^{\left[\frac{n}{n-k}\right]}\langle a_r;(\ast\omega)^r \rangle,\text{ for all }\omega\in\Lambda^k.
\end{equation}
\end{corollary}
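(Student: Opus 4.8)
The plan is to deduce this directly from Theorem \ref{characterizations of affine functions} by transporting everything through the Hodge star, so that essentially no new analysis is required. First I would fix $0\leqslant k\leqslant n-1$ and set $p:=n-k$, so that $1\leqslant p\leqslant n$ and Theorem \ref{characterizations of affine functions} is available in degree $p$. By the third item of Remark \ref{23.4.2014.1}, $f$ is int.\ one convex if and only if its Hodge transform $f_\ast:\Lambda^{p}\rightarrow\R$ is ext.\ one convex; applying this to both $f$ and $-f$ shows that $f$ is int.\ one affine if and only if $f_\ast$ is ext.\ one affine.

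Next I would apply Theorem \ref{characterizations of affine functions} to $f_\ast:\Lambda^{p}\rightarrow\R$. Since $1\leqslant p\leqslant n$, the function $f_\ast$ is ext.\ one affine if and only if there exist $a_r\in\Lambda^{pr}$, for $r=0,\ldots,\left[\frac{n}{p}\right]$, with $f_\ast(\varphi)=\sum_{r}\langle a_r;\varphi^r\rangle$ for all $\varphi\in\Lambda^{p}$, the upper limit being $\left[\frac{n}{p}\right]=\left[\frac{n}{n-k}\right]$ because $\varphi^r=0$ once $r>\left[\frac{n}{n-k}\right]$.

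It then remains to rewrite this identity in terms of $\omega\in\Lambda^k$ rather than $\varphi\in\Lambda^{p}$. The Hodge star is a linear isomorphism $\ast:\Lambda^{p}\rightarrow\Lambda^{k}$, so given $\omega\in\Lambda^k$ there is a unique $\varphi$ with $\ast\varphi=\omega$, namely $\varphi=(-1)^{k(n-k)}\ast\omega$, using $\ast\ast=(-1)^{k(n-k)}\operatorname{id}$ on $\Lambda^{n-k}$. Substituting $f(\omega)=f_\ast(\varphi)$ together with $\varphi^r=(-1)^{k(n-k)r}(\ast\omega)^r$ gives $f(\omega)=\sum_{r}\langle (-1)^{k(n-k)r}a_r;(\ast\omega)^r\rangle$, and renaming $(-1)^{k(n-k)r}a_r$ as $a_r\in\Lambda^{(n-k)r}$ yields exactly \eqref{24.9.2015.4}. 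Since every step above is an equivalence, both directions of the corollary follow simultaneously.

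I would not expect any genuine difficulty here: the only point to watch is the sign produced by $\ast\ast$, and that is harmless because it is absorbed into the free coefficients $a_r$. All the substance is already contained in Theorem \ref{characterizations of affine functions}; the Hodge transform merely relabels the degree $k$ as $n-k$ and replaces $\omega^r$ by $(\ast\omega)^r$.
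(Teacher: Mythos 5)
Your proposal is correct and is exactly the route the paper takes: the paper's proof consists of the single remark that the corollary ``follows immediately'' from Theorem \ref{characterizations of affine functions} via the Hodge transform of Remark \ref{23.4.2014.1}, and you have simply written out the details (including the harmless $\ast\ast=(-1)^{k(n-k)}\operatorname{id}$ sign, which is indeed absorbed into the free coefficients $a_r$). No gaps.
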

An interesting consequence of Theorem \ref{characterizations of affine functions} and Corollary \ref{characterizations of intaffine functions} is the following.
\begin{theorem}\label{5.9.2013.1}
Let $1\leqslant k\leqslant n-1$ satisfy $2k\neq n$. Then, $f:\Lambda^k\rightarrow\mathbb{\R}$ is affine if and only if $f$ is both ext. and int.\ one affine.
\end{theorem}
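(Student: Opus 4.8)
The plan is to dispose of the forward implication immediately and then extract the reverse one from the two characterizations just proved. If $f$ is affine, say $f(\omega)=\langle a_{1};\omega\rangle+a_{0}$ with $a_{1}\in\Lambda^{k}$ and $a_{0}\in\R$, then for any $a\in\Lambda^{1}$ and $b\in\Lambda^{k-1}$ the restriction $t\mapsto f(\omega+t\,a\wedge b)$ is affine in $t$, hence convex together with its negative, so $f$ is ext.\ one affine; taking instead $b\in\Lambda^{k+1}$ and the slope $a\lrcorner b$ shows $f$ is int.\ one affine. Only the converse needs the hypothesis $2k\neq n$.

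For the converse, assume $f$ is both ext.\ and int.\ one affine. Theorem \ref{characterizations of affine functions} supplies $a_{r}\in\Lambda^{kr}$ with
$$
f(\omega)=\sum_{r=0}^{\left[\frac{n}{k}\right]}\langle a_{r};\omega^{r}\rangle ,
$$
while Corollary \ref{characterizations of intaffine functions} supplies $b_{r}\in\Lambda^{(n-k)r}$ with
$$
f(\omega)=\sum_{r=0}^{\left[\frac{n}{n-k}\right]}\langle b_{r};(\ast\omega)^{r}\rangle .
$$
Each term $\langle a_{r};\omega^{r}\rangle$ and each term $\langle b_{r};(\ast\omega)^{r}\rangle$ is a homogeneous polynomial in $\omega$ of degree $r$, since the $r$-th power is $r$-homogeneous and the scalar product is linear. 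Because the decomposition of a polynomial function into homogeneous components is unique over $\R$, I would match the two expressions degree by degree to obtain, for every $r$,
$$
\langle a_{r};\omega^{r}\rangle=\langle b_{r};(\ast\omega)^{r}\rangle\quad\text{for all }\omega\in\Lambda^{k},
$$
where a coefficient is read as zero whenever its degree exceeds the relevant bound.

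The heart of the argument is then a short dimension count applied to this identity for $r\geqslant 2$. Suppose the common degree-$r$ part is not identically zero. Non-vanishing of the left-hand side forces $\omega^{r}\neq 0$ for some $\omega$, hence $\Lambda^{kr}\neq\{0\}$, i.e.\ $kr\leqslant n$; non-vanishing of the right-hand side likewise forces $(n-k)r\leqslant n$. Adding these inequalities gives $nr\leqslant 2n$, so $r\leqslant 2$ and therefore $r=2$. But $r=2$ requires both $2k\leqslant n$ and $2(n-k)\leqslant n$, which together give $2k=n$, contradicting the hypothesis. Hence the degree-$r$ part vanishes for every $r\geqslant 2$, leaving $f(\omega)=a_{0}+\langle a_{1};\omega\rangle$, which is affine.

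I do not expect a genuine obstacle here. The only points needing care are the uniqueness of the homogeneous decomposition used to pass to the matched identity, which is standard, and the remark that $\langle a_{r};\omega^{r}\rangle$ vanishes identically as soon as $kr>n$ because $\Lambda^{kr}$ is then trivial. The condition $2k\neq n$ is used exactly once, to eliminate the single borderline degree $r=2$; this is precisely the middle dimension at which a genuine quadratic term in $\omega$ and $\ast\omega$ would otherwise be permitted, and it is exactly the case excluded by hypothesis.
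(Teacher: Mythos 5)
Your proof is correct and rests on the same ingredients as the paper's: Theorem \ref{characterizations of affine functions}, Corollary \ref{characterizations of intaffine functions}, and the observation that $\langle a_r;\omega^r\rangle$ (resp.\ $\langle b_r;(\ast\omega)^r\rangle$) vanishes identically once $kr>n$ (resp.\ $(n-k)r>n$). The paper organizes this slightly more economically by splitting into the cases $2k>n$ and $2k<n$ and applying only one of the two characterizations in each case, which sidesteps your appeal to uniqueness of the homogeneous decomposition, but your matched-degree argument is valid and reaches the same conclusion.
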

\begin{remark}Theorem \ref{5.9.2013.1} does not hold if $2k=n$ with $k$ even. To see this, define $f:\Lambda^k\left(\mathbb{R}^{2k}\right)\rightarrow\mathbb{R}$ by
$$
f(\omega):=\left\langle e^1\wedge\cdots e^{2k};\omega\wedge\omega\right\rangle,\text{ for all }\omega\in \Lambda^k\left(\mathbb{R}^{2k}\right).
$$
\end{remark}
\begin{proof} If $2k>n$, the conclusion follows trivially from Theorem \ref{characterizations of affine functions}. If $2k<n$, i.e. $n<2(n-k)$, since $f$ is int.\ one affine, using Corollary \ref{characterizations of intaffine functions}, we deduce that $f$ is affine.
\end{proof}
\begin{theorem}[Characterization of ext-int.\ one affine functions]
\label{Thm principal ext-int. quasiaffine}Let $1\leqslant k\leqslant n-1$ and $f:\Lambda^{k+1} \times \Lambda^{k-1} \rightarrow\mathbb{R}.$ The following statements are equivalent
\begin{itemize}
\item [$(i)$] $f$ is ext-int.\ polyaffine.
\item [$(ii)$] $f$ is ext-int.\ quasiaffine.
\item [$(iii)$] $f$ is ext-int.\ one affine.
\item [$(iv)$] For all $0\leqslant s\leqslant\left[
\frac{n}{k+1}\right]$ and $0 \leqslant r\leqslant\left[\frac{n}{n-k+1}\right] $, there exist $c_{s}\in\Lambda^{(k+1)s}$, $d_{r}\in\Lambda^{(n-k+1)r}$ such that
\[
f\left(  \xi , \eta \right)  =\sum_{s=0}^{\left[ \frac{n}{k+1}\right]  }\left\langle c_{s}%
;\xi^{s}\right\rangle  + \sum_{r=0}^{\left[  \frac{n}{n-k+1} \right]  }\left\langle d_{r}%
;(\ast\eta)^{r}\right\rangle,\text{ for all }\xi\in\Lambda^{k+1},\eta\in\Lambda^{k-1}.
\]
\end{itemize}
\end{theorem}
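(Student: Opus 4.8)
The four statements will be linked in the cycle $(iv)\Rightarrow(i)\Rightarrow(ii)\Rightarrow(iii)\Rightarrow(iv)$, of which only the last link carries real content. For $(iv)\Rightarrow(i)$, the expression in $(iv)$ is patently ext-int.\ polyaffine: the affine map
\[
F\left(X_1,\ldots,X_{\left[\frac{n}{k+1}\right]},Y_1,\ldots,Y_{\left[\frac{n}{n-k+1}\right]}\right):=c_0+d_0+\sum_{s=1}^{\left[\frac{n}{k+1}\right]}\left\langle c_s;X_s\right\rangle+\sum_{r=1}^{\left[\frac{n}{n-k+1}\right]}\left\langle d_r;Y_r\right\rangle
\]
has both $F$ and $-F$ convex and satisfies $f(\xi,\eta)=F(\xi,\ldots,\xi^{[n/(k+1)]},\ast\eta,\ldots,(\ast\eta)^{[n/(n-k+1)]})$. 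The implications $(i)\Rightarrow(ii)\Rightarrow(iii)$ follow by applying the chain recorded in Remark \ref{23.4.2014.1} to both $f$ and $-f$. Hence it remains only to prove $(iii)\Rightarrow(iv)$.

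Assume $f$ is ext-int.\ one affine. By Lemma \ref{corollary of crucial lemma for extintaffine}, $f_\eta$ is ext.\ one affine for every $\eta$ and $f^\xi$ is int.\ one affine for every $\xi$. Applying Theorem \ref{characterizations of affine functions} (with $k+1$ in the role of $k$) to $f_\eta$ and Corollary \ref{characterizations of intaffine functions} (with $k-1$ in place of $k$) to $f^\xi$, we obtain the two representations
\[
f(\xi,\eta)=\sum_{s=0}^{\left[\frac{n}{k+1}\right]}\left\langle a_s(\eta);\xi^s\right\rangle=\sum_{r=0}^{\left[\frac{n}{n-k+1}\right]}\left\langle b_r(\xi);(\ast\eta)^r\right\rangle,
\]
with $a_s(\eta)\in\Lambda^{(k+1)s}$ and $b_r(\xi)\in\Lambda^{(n-k+1)r}$. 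Expanding in homogeneous degrees, each coefficient $\left\langle a_s(\eta);\cdot\right\rangle$ is int.\ one affine in $\eta$ (for fixed $\xi$ it is a linear combination of the int.\ one affine functions $f^{\lambda\xi}$) and each $\left\langle b_r(\xi);\cdot\right\rangle$ is ext.\ one affine in $\xi$; in particular every $a_s$ is a polynomial in $\ast\eta$.

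A genuine $\xi$-nonlinearity (some $a_s$ with $s\geqslant2$ nonzero) requires $\left[\frac{n}{k+1}\right]\geqslant2$, that is $n\geqslant2(k+1)$, whereas a genuine $\eta$-nonlinearity requires $\left[\frac{n}{n-k+1}\right]\geqslant2$, that is $2k\geqslant n+2$; these cannot hold simultaneously. Thus at least one variable occurs affinely. I treat the case in which $\xi$ is affine (so $n\leqslant2k+1$); the complementary case in which $\eta$ is affine ($n\geqslant2k-1$) is handled by the identical argument with the roles of $\xi$ and $\ast\eta$ interchanged, and the two cases overlap on $2k-1\leqslant n\leqslant2k+1$. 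In the first case $f(\xi,\eta)=a_0(\eta)+\left\langle a_1(\eta);\xi\right\rangle$, where $a_0=f(0,\cdot)$ is int.\ one affine and hence already equals $\sum_{r}\left\langle d_r;(\ast\eta)^r\right\rangle$ by Corollary \ref{characterizations of intaffine functions}. It therefore suffices to prove that $a_1\colon\Lambda^{k-1}\to\Lambda^{k+1}$ is constant; one then sets $c_1:=a_1$, absorbs the constant into $d_0$, and takes the remaining $c_s=0$.

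Since $(\xi,\eta)\mapsto a_0(\eta)$ is itself ext-int.\ one affine, so is the mixed term $\left\langle a_1(\eta);\xi\right\rangle=f(\xi,\eta)-a_0(\eta)$, which is moreover linear in $\xi$. Writing $P:=a\wedge b$ and $Q:=a\lrcorner b$ and imposing that $t\mapsto\left\langle a_1(\eta+tQ);\xi+tP\right\rangle$ be affine, the summand $\left\langle a_1(\eta+tQ);\xi\right\rangle$ is automatically affine in $t$ (the components of $a_1$ being int.\ one affine), so the obstruction reduces to the second derivative of $t\left\langle a_1(\eta+tQ);P\right\rangle$; this vanishes precisely when $\left\langle a_1(\eta+tQ);P\right\rangle$ is constant in $t$, i.e.\ when
\[
\left\langle Da_1(\eta)[\,a\lrcorner b\,];a\wedge b\right\rangle=0,\qquad\text{for all }\eta\in\Lambda^{k-1},\,a\in\Lambda^1,\,b\in\Lambda^k.
\]
Writing $a_1(\eta)=\sum_{m\geqslant0}\Phi_m\big((\ast\eta)^m\big)$ with $\Phi_m$ linear, using the Hodge identity $\ast(a\lrcorner b)=\pm\,a\wedge\ast b$, and separating homogeneity degrees in $\eta$, this becomes, for each $m\geqslant1$,
\[
\left\langle \Phi_m\big((\ast\eta)^{m-1}\wedge a\wedge\ast b\big);a\wedge b\right\rangle=0,\qquad\text{for all }\eta,a,b.
\]
I expect this to be the main obstacle: one must show that these orthogonality relations, which couple the one-divisible form $a\wedge b$ to the dual configuration $a\wedge\ast b$ sharing the same factor $a$, force $\Phi_m=0$ for every $m\geqslant1$, so that $a_1\equiv\Phi_0$ is constant. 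The relations have exactly the shape of those analysed in Lemma \ref{orthogonality 31.7.2012} and Lemma \ref{23.9.2012.1}, and I would dispatch them by the same order-preserving permutation bookkeeping of multi-indices, combined with the rank/annihilator estimate employed there. That this step is genuinely necessary, and not a formal consequence of separate affinity in each slot, is exhibited by the example $f(\xi,\eta)=(\ast\xi)\eta$ (with $k=1$, $n=2$) of the Remark following Lemma \ref{corollary of crucial lemma for extintaffine}, for which the displayed relation fails with a nonconstant $a_1$.
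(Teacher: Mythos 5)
Your cycle of implications and the reductions preceding the final step are sound and run parallel to the paper's Steps 1--3: you expand $f$ in powers of $\xi$ via Theorem \ref{characterizations of affine functions}, observe that the coefficients are int.\ one affine in $\eta$ (the paper isolates each component $c_s^I(\eta)$ by evaluating at a special $\xi$; your argument via $f^{\lambda\xi}-f^0$ and homogeneity accomplishes the same), and correctly reduce everything to the vanishing of the mixed terms, i.e.\ of the coefficients coupling $\xi^{s}$ with $(\ast\eta)^{r}$ for $s,r\geqslant 1$. Your observation that the degree bounds $n\geqslant 2(k+1)$ and $n\geqslant 2(n-k+1)$ are incompatible, so that one variable is forced to appear affinely, is a clean way to organize the case analysis (the paper defers this to Corollary \ref{corollary to the main theorem} and instead kills every mixed coefficient $d^{I,J}_{r,s}$ uniformly).

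However, the proof is not complete: the entire content of the theorem sits in the step you label as ``the main obstacle'' and leave as an expectation. Showing that the relations $\left\langle \Phi_m\bigl((\ast\eta)^{m-1}\wedge a\wedge\ast b\bigr);a\wedge b\right\rangle=0$ force $\Phi_m=0$ for $m\geqslant 1$ is precisely the mixed-term cancellation, and it does \emph{not} follow by citing Lemmas \ref{orthogonality 31.7.2012} and \ref{23.9.2012.1}: those lemmas concern a single $k$-form tested against the \emph{same} one-divisible direction $a\wedge b$ in both slots, whereas here $a\wedge b\in\Lambda^{k+1}$ is coupled to the dual configuration $a\wedge\ast b\in\Lambda^{n-k+1}$, forms of different degrees living on complementary index sets, so the bookkeeping is structurally different. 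The paper closes this step (its Step 4) by a direct construction that you would do well to adopt: since $(k+1)s+(n-k+1)r\geqslant n+2>n$ for $s,r\geqslant 1$, any pair of multi-indices $I\in\mathcal{T}_{(k+1)s}$, $J\in\mathcal{T}_{(n-k+1)r}$ must share an index $i_p=j_q$; choosing $a:=e^{i_p}$, $b:=e^{I_p(i_p)}+\ast e^{J_q(j_q)}$, and $\xi$, $\ast\eta$ to be the normalized sums of the remaining blocks of $I$ and $J$, the quadratic term in $t$ of $f_{r,s}(\xi+ta\wedge b,\eta+ta\lrcorner b)$ collapses to $\pm t^2 r!\,s!\,d^{I,J}_{r,s}$, whence $d^{I,J}_{r,s}=0$. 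No rank or annihilator estimate is needed for this step. Until you supply an argument of this kind (or genuinely adapt the index-permutation machinery to the mixed setting), the proof has a gap exactly where the theorem's difficulty lies, as your own example $f(\xi,\eta)=(\ast\xi)\eta$ demonstrates.
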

Theorem \ref{Thm principal ext-int. quasiaffine} has the curious implication that nonlinearity can trickle into an ext-int.\ one affine function at the most through one variable. This is formally stated in the following corollary whose proof is easy enough to skip over.
\begin{corollary}\label{corollary to the main theorem}
Let $1\leqslant k\leqslant n-1$. Then $f:\Lambda^{k+1} \times \Lambda^{k-1} \rightarrow\mathbb{R}$ is ext-int.\ one affine if and only if there exist $g:\Lambda^{k+1}\rightarrow\R$ and $h:\Lambda^{k-1}\rightarrow\R$ such that
$$
f\left(  \xi , \eta \right)  =g\left(\xi\right)+h\left(\eta \right),\text{ for all }\xi\in\Lambda^{k+1},\eta\in\Lambda^{k-1},
$$
where
\begin{itemize}
\item [$(i)$] $g$ is affine and $h$ is int.\ one affine, when $n\leqslant 2k-2$.
\item [$(ii)$]$g$, $h$ are affine, when $n=2k-1,2k,2k+1$, or both of $k,n$ are even.
\item [$(iii)$]$g$ is ext.\ one affine and $h$ is affine, when $n\geqslant 2k+2$.
\end{itemize}
\end{corollary}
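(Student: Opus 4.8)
The whole statement is a bookkeeping consequence of Theorem~\ref{Thm principal ext-int. quasiaffine} together with the degree and parity of the wedge powers appearing there, so the plan is to make the splitting $f = g + h$ explicit and then track when the higher powers are forced to vanish. For the backward implication, observe that in each of the three cases $g$ is at least ext.\ one affine and $h$ is at least int.\ one affine (an affine function being trivially both). Fixing $\xi, \eta, a \in \Lambda^1, b \in \Lambda^k$ and writing $\varphi(t) := f(\xi + t\,a\wedge b, \eta + t\,a\lrcorner b) = g(\xi + t\,a\wedge b) + h(\eta + t\,a\lrcorner b)$, the first summand is affine in $t$ because $a \wedge b$ (with $a \in \Lambda^1$, $b \in \Lambda^k = \Lambda^{(k+1)-1}$) is precisely an ext.\ one direction for a function on $\Lambda^{k+1}$, and the second is affine in $t$ because $a \lrcorner b$ (with $b \in \Lambda^k = \Lambda^{(k-1)+1}$) is an int.\ one direction for a function on $\Lambda^{k-1}$. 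Hence $\varphi$ is affine for every such $a, b$, and $f = g + h$ is ext-int.\ one affine.

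For the forward implication I would invoke the equivalence $(iii) \Rightarrow (iv)$ of Theorem~\ref{Thm principal ext-int. quasiaffine} to write $f(\xi, \eta) = g(\xi) + h(\eta)$ with $g(\xi) := \sum_{s} \langle c_s; \xi^s\rangle$ and $h(\eta) := \sum_{r} \langle d_r; (\ast\eta)^r\rangle$. By the (easy) converse direction of Theorem~\ref{characterizations of affine functions}, applied with order $k+1$, the function $g$ is ext.\ one affine; by Corollary~\ref{characterizations of intaffine functions}, applied with order $k-1$ so that $\ast\eta \in \Lambda^{n-k+1}$, the function $h$ is int.\ one affine. It then remains to decide, in each regime, when $g$ or $h$ is in fact affine, i.e.\ when its powers of order $\geqslant 2$ must drop out.

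This last point is pure degree--parity arithmetic. Since $\xi^s \in \Lambda^{(k+1)s}$ and $\Lambda^j = \{0\}$ for $j > n$, the power $\xi^s$ vanishes once $(k+1)s > n$, so $\xi^s = 0$ for every $s \geqslant 2$ as soon as $n \leqslant 2k+1$; and since $\xi$ has degree $k+1$ we have $\xi \wedge \xi = (-1)^{k+1}\,\xi \wedge \xi$, whence $\xi^2 = 0$ whenever $k$ is even. Either condition collapses $g$ to an affine function. Symmetrically $(\ast\eta)^r \in \Lambda^{(n-k+1)r}$ vanishes once $(n-k+1)r > n$, so it is zero for every $r \geqslant 2$ as soon as $n \geqslant 2k-1$, and $(\ast\eta)^2 = 0$ whenever $n - k$ is even; either condition collapses $h$ to an affine function.

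Assembling the cases is then immediate: in $(i)$, $n \leqslant 2k-2$ yields $n \leqslant 2k+1$, so $g$ is affine while $h$ stays int.\ one affine; in $(iii)$, $n \geqslant 2k+2$ yields $n \geqslant 2k-1$, so $h$ is affine while $g$ stays ext.\ one affine; in $(ii)$, the window $n \in \{2k-1, 2k, 2k+1\}$ satisfies both $n \leqslant 2k+1$ and $n \geqslant 2k-1$ and hence forces both affine, while the parity hypothesis that $k, n$ are both even forces $g$ affine (via $k$ even) and $h$ affine (via $n-k$ even). I expect the only genuine care to lie in verifying that the overlapping regimes---for instance $n \leqslant 2k-2$ together with $k, n$ both even---produce mutually consistent conclusions (there the extra parity simply upgrades $h$ from int.\ one affine to affine), and in correctly matching each vanishing criterion to the stated bound on $n$.
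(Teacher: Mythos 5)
Your proposal is correct and follows exactly the route the paper intends: the paper omits the proof as routine, but its introductory remark (nonlinearity in $\xi$ requires $k$ odd and $n\geqslant 2(k+1)$, in $\eta$ requires $n-k$ odd and $n\geqslant 2(n-k+1)$, and these regimes are incompatible) is precisely your degree--parity bookkeeping applied to the representation from Theorem~\ref{Thm principal ext-int. quasiaffine}$(iv)$. Your explicit treatment of the backward implication and of the overlapping regimes is a faithful filling-in of that skipped argument.
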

\begin{remark} There is no analogue of Corollary \ref{corollary to the main theorem} at the level of ext-int.\ polyconvexity. In other words, there are ext-int.\ polyconvex functions that cannot be written as a sum of ext.\ polyconvex and int.\ polyconvex functions. To see this, one may consider the following function $f:\Lambda^2\times\R\rightarrow\R$, with $k=1,n=4$, defined as
$$
f\left(\xi,\eta\right):=e^{\lvert\xi\wedge\xi\rvert^2+\eta^2},\text{ for all }\xi\in\Lambda^2,\eta\in\R.
$$
\end{remark}
Let us proceed to proving Theorem \ref{Thm principal ext-int. quasiaffine}.\\ \\
\begin{proof} The chain of implications $(i) \Rightarrow (ii) \Rightarrow (iii) $, follows from standard techniques of classical calculus of variations, see \cite{Sil} for details. It is obvious from the definition of ext-int. polyconvexity that $(iv)$ implies $(i)$. It remains to prove $(iii) \Rightarrow (iv)$. Let us divide the proof in four steps.
\\ \\
\noindent\emph{ Step 1:} For each $\eta\in\Lambda^{k-1}$, we use Lemma \ref{corollary of crucial lemma for extintaffine} and Theorem \ref{characterizations of affine functions} to find $c_{s} (\eta ) \in \Lambda^{(k+1)s}$ for all $s=0,\ldots,\left[ \frac{n}{k+1}\right]$, such that
\begin{equation}
 f\left(  \xi , \eta \right)  = f_{\eta}\left(\xi\right)=\sum_{s=0}^{\left[ \frac{n}{k+1}\right]  }\left\langle c_{s} (\eta )
;\xi^{s}\right\rangle ,\text{ for all }\xi\in\Lambda^{k+1},\eta\in \Lambda^{k-1},
\end{equation}
where $c_{s}:\Lambda^{k-1}\rightarrow\Lambda^{(k+1)s}$ is such that the function $\eta \mapsto f(\xi, \eta)$ is int.\ one affine for every $\xi \in \Lambda^{k+1}$. Defining $$f_{s}(\xi, \eta):= \left\langle c_{s} (\eta ) ;\xi^{s}\right\rangle,\text{ for all }\xi\in\Lambda^{k+1},\eta\in \Lambda^{k-1},$$ we see that due to
different degrees of homogeneity in $\xi$, for each $s$, $f_{s}$ is ext-int.\ one affine. Hence, it is enough to consider each $f_s$ separately, with $0 \leqslant s \leqslant \left[ \frac{n}{k+1}\right]$.
\\ \\
\emph{Step 2:} Let $0 \leqslant s \leqslant \left[ \frac{n}{k+1}\right]$ be fixed. Let us write
\begin{equation}\label{29.4.2015.1}
 f_{s}(\xi, \eta) = \sum_{I \in \mathcal{T}_{(k+1)s}} c_{s}^{I} (\eta )(\xi^{s})_{I},
\end{equation}
where $c_{s}^{I} (\eta )$, $(\xi^{s})_{I}$ denote the $I$-th component of $c_{s}(\eta )$ and $\xi^{s}$ respectively, for all $I \in \mathcal{T}_{(k+1)s}$. We claim that for each
multiindex $I \in \mathcal{T}_{(k+1)s}$, $c_{s}^{I}$ is int.\ one affine.
\\ \\
Indeed, there is nothing to prove when $s=0$. When
$1 \leqslant s \leqslant \left[ \frac{n}{k+1}\right]$ and $I = \left(i_1,\ldots, i_{(k+1)s}\right)\in \mathcal{T}_{(k+1)s}$, on setting
$$\xi_1 := e^{i_1}\wedge\cdots\wedge e^{i_{k+1}} +  e^{i_{k+2}}\wedge\cdots\wedge e^{i_{2(k+1)}} + \cdots +
e^{i_{(k+1)(s-1) +1}}\wedge \cdots\wedge e^{i_{(k+1)s}},$$
we see that
 $$f_{s}(\xi_1, \eta) = s!\,c_{s}^{I} (\eta ),\text{ for all }\eta\in\Lambda^{k-1},$$
from where it follows that $c_{s}^{I}$ is int.\ one affine as $f_{s}$ is ext-int.\ one affine, see Lemma \ref{corollary of crucial lemma for extintaffine}. This proves the claim. \smallskip
\\ \\
\emph{Step 3:} Invoking Corollary \ref{characterizations of intaffine functions}, it follows from Step 2 that
\begin{equation*}
 c_{s}^{I}(\eta) = \sum_{r=0}^{\left[  \frac{n}{n-k+1} \right]  }\left\langle d^{I}_{r,s};(\ast\eta)^{r}\right\rangle,\text{ for all }\eta\in\Lambda^{k-1}.
\end{equation*}
Therefore, using Equation \eqref{29.4.2015.1},
\begin{align}
f_{s}(\xi, \eta) &= \sum_{I \in \mathcal{T}_{(k+1)s}}\left(\sum_{r=0}^{\left[  \frac{n}{n-k+1} \right]  }\left\langle
d^{I}_{r,s};(\ast\eta)^{r}\right\rangle \right)(\xi^{s})_{I}\notag\\
  &= \sum_{r=0}^{\left[  \frac{n}{n-k+1} \right]}  \left(  \sum_{I \in \mathcal{T}_{(k+1)s}}\left\langle d^{I}_{r,s};(\ast\eta)^{r}\right\rangle(\xi^{s})_{I}\right),\text{ for all }\xi\in\Lambda^{k+1},\eta\in \Lambda^{k-1}.
\end{align}
Once again, by different degree of homogeneity in $\ast \eta$, it is enough to consider fixed but arbitrary $r$ with $ 0 \leqslant r \leqslant \left[  \frac{n}{n-k+1}\right] $. To that effect, we define $f_{r,s}:\Lambda^{k+1}\times\Lambda^{k-1}\rightarrow\R$ as
$$ f_{r,s}( \xi, \eta) :=  \sum_{I \in \mathcal{T}_{(k+1)s}}\left\langle d^{I}_{r,s};(\ast\eta)^{r}\right\rangle(\xi^{s})_{I},\text{ for all }\xi\in\Lambda^{k+1},\eta\in \Lambda^{k-1}.$$ This can be written as,
\begin{equation}\label{rsextintterm}
 f_{r,s}(\xi, \eta ) = \sum_{I \in \mathcal{T}_{(k+1)s}} \sum_{J \in \mathcal{T}_{(n-k+1)r}} d^{I,J}_{r,s}((\ast\eta)^{r})_{J}(\xi^{s})_{I}
\end{equation}
\noindent\emph{Step 4:} \smallskip To finish the proof, it is enough to prove that for all $I \in \mathcal{T}_{(k+1)s}$ and $J \in \mathcal{T}_{(n-k+1)r}$,
\begin{equation}\label{30.4.2015.3}
d^{I,J}_{r,s} = 0,\text{ for all } 1 \leqslant s \leqslant \left[ \frac{n}{k+1}\right],\,1 \leqslant r \leqslant \left[  \frac{n}{n-k+1}\right].
\end{equation}
We now proceed to show that. Let $ 1 \leqslant s \leqslant \left[ \frac{n}{k+1}\right]$, $1 \leqslant r \leqslant \left[  \frac{n}{n-k+1}\right]$ be fixed. Note that, for any $I \in \mathcal{T}_{(k+1)s}$ and
$J \in \mathcal{T}_{(n-k+1)r} $, $I\cap J$ has at least one element (In fact, there must be at least two).
Let us write $I = \left( i_1,\ldots, i_{(k+1)s}\right)\in \mathcal{T}_{(k+1)s}$ and $J =\left( j_1,\ldots, j_{(n-k+1)r}\right)\in \mathcal{T}_{(n-k+1)r}$ with $i_p = j_q$ for some $p,q$.
\\ \\
Let us divide $I$ into $s$ blocks of multiindices $ I^{\alpha}$ each containing $k+1$ indices, where
 $$I^{\alpha}: = \left(i_{(\alpha - 1)(k+1) +1},
\ldots, i_{\alpha(k+1)}    \right),\text{ for all }1 \leqslant \alpha \leqslant s.$$
Similarly, we divide the multiindex $J$ into $r$ blocks of multiindices $J^{\beta}$,
each containing $n-k+1$ indices, $J^{\beta}$, where
$$J^{\beta}:= \left( j_{(\beta - 1)(n-k+1) +1},
\ldots, j_{\beta(n-k+1)}    \right)\text{ for all }1 \leqslant \beta \leqslant r.
$$
Furthermore, for the sake of clarity, let $I_{p} \in \mathcal{T}_{k+1}$ denote the block of $k+1$ indices of $I$  containing $i_p$  and $J_{q} \in \mathcal{T}_{n-k+1}$ denote the block of $n-k+1$ indices of $J$  which contains $j_q$. Note that, in our notation, this implies that
$$I_{p} = I^{\left[ \frac{p-1}{k+1} \right] +1 },\text{ and }
J_{q} = J^{\left[ \frac{q-1}{n-k+1} \right] +1 } .$$
%Also, let $I_{p}^{'} = I_{p}\setminus \{ i_p \}$
%and $J_{q}^{'} = J_{q}\setminus \{ j_q \}$ . \smallskip
Let us choose
\begin{equation}\label{30.4.2015.2}
 \left\lbrace
\begin{aligned}
 a &:= e^{i_p} = e^{j_q}, & b &:= e^{I_p(i_p)} + \ast\, e^{J_q(j_q)} , \\
\xi &:= \frac{1}{(s-1)!}\sum_{\substack{1 \leqslant \alpha \leqslant s \\ \alpha \neq \left[ \frac{p-1}{k+1} \right] +1 }} e^{ I^{ \alpha } } ,&
\ast \eta &:= \frac{1}{(r-1)!}\sum_{\substack{1 \leqslant \beta \leqslant r\\ \beta \neq \left[ \frac{q-1}{n-k+1} \right] +1 }} e^{I^{\beta}}.
\end{aligned}
\right.
\end{equation}
Of course, if $s=1$, we choose $\xi=0$, and if $r=1$, we choose $\ast \eta = 0$.
Clearly
\begin{align*}
 a \wedge b &= e^{i_p}\wedge e^{I_{p}\left(i_p\right)} +  e^{j_q}\wedge\left( \ast e^{J_{q}(j_q)} \right) = \operatorname*{sgn}\left(i_p,I_{p}(i_p)\right)e^{I_{p}},\\
a \wedge  \ast b  &= e^{i_p}\wedge \left(  \ast e^{I_{p}\left(i_p\right)} \right) +(-1)^{k(n-k)} e^{j_q}  \wedge  e^{J_{q}(j_q)}  \notag \\
   &=\operatorname*{sgn}\left(j_q,J_{q}(j_q)\right)(-1)^{k(n-k)} e^{J_{q}}.
\end{align*}
Moreover, we observe that
\begin{equation}\label{30.4.2015.1}
\xi ^{s-1} = e^{I\setminus I_{p}},\text{ and }
 \left( \ast \eta \right)^{r-1} = e^{J\setminus J_{q}} .
\end{equation}
Note that here we implicitly used the following facts. If $s = 1 \textrm{ or } 2$, Equation \eqref{30.4.2015.1} is
trivially true, and if $s \geqslant 2$, it follows that $k+1$ is even, for otherwise, terms containing $\xi^s$ are absent from the expression for $f$.
If $k+1$ is even, Equation \eqref{30.4.2015.1} is easily seen to hold for any $ 2 \leqslant s \leqslant \left[ \frac{n}{k+1}\right]$. One can similarly argue for $\ast\eta$.
\\ \\
Henceforth, we will disregard questions of signs, as it is not important for the argument and use $\pm$ to denote that either sign is possible. Using Equation \ref{rsextintterm}, we have, for any $t \in [ 0,1 ]$,
\begin{align*}
 f_{r,s} & (\xi + t a \wedge b , \eta + t a \lrcorner b)\\
  &= \sum_{K \in \mathcal{T}_{(k+1)s},L \in \mathcal{T}_{(n-k+1)r}} d^{K,L}_{r,s}((\ast \left( \eta +  t a \lrcorner b \right))^{r})_{L}
 (\left( \xi + t a \wedge b \right)^{s})_{K} \notag \\
  &=\sum_{K \in \mathcal{T}_{(k+1)s},L \in \mathcal{T}_{(n-k+1)r}} d^{K,L}_{r,s}((\ast \eta \pm  t a \wedge \left( \ast b \right))^{r})_{L}
 (\left( \xi + t a \wedge b \right)^{s})_{K}.
 \end{align*}
 With our choice of $a, b, \xi, \eta$ in Equation \eqref{30.4.2015.2},
the quadratic term in $t$, say $Q(t)$,  in the above expression on the right hand side is, for all $t\in [0,1]$,
 \begin{align*}
  Q(t)&=\pm t^{2}r!s! \sum_{K \in \mathcal{T}_{(k+1)s},L \in \mathcal{T}_{(n-k+1)r}} d^{K,L}_{r,s}
  \left( \left(\ast\eta \right)^{r-1}\wedge a \wedge \left(\ast b  \right)\right)_{L} \left( \xi^{s-1}\wedge a \wedge b    \right)_{K}\\
  &= \pm t^{2}r!s! \sum_{K \in \mathcal{T}_{(k+1)s},L \in \mathcal{T}_{(n-k+1)r}} d^{K,L}_{r,s}
  \left( e^{J\setminus J_{q}} \wedge \left( \pm e^{J_{q}}\right)\right)_{L} \left( e^{I\setminus I_{p}} \wedge \left( \pm e^{I_{p}} \right)    \right)_{K}\\
  &= \pm t^{2}r!s! \sum_{K \in \mathcal{T}_{(k+1)s},L \in \mathcal{T}_{(n-k+1)r}} d^{K,L}_{r,s}
  \left( \pm e^{J}\right)_{L} \left(  \pm e^{I} \right)_{K}
  = \pm t^{2}r!s! d^{I,J}_{r,s}.
 \end{align*}
Since $f_{r,s}$ is ext-int.\ one affine, $Q(t)=0$, for all $t\in [0,1]$, which forces
 $d^{I,J}_{r,s} = 0$. This proves Equation \eqref{30.4.2015.3} and the proof is complete. \end{proof}
 \\ \\
\textbf{Acknowledgement.} We have benefitted of interesting discussions with Professor Bernard Dacorogna. Part of this work was completed during visits of S. Bandyopadhyay to EPFL, whose hospitality and support is gratefully acknowledged. The research of S. Bandyopadhyay was partially supported by a SERB research project titled ``Pullback Equation for Differential Forms".

\end{document}